\newtheorem{theorem}{Theorem}[section]
\newtheorem{cor}[theorem]{Corollary}
\newtheorem{lem}[theorem]{Lemma}
\newtheorem{ppn}[theorem]{Proposition}
\newcommand{\cN}{\mathcal{N}}
\newcommand{\dcapf}{d^R}
\newcommand{\dcapb}{d^L}
\newcommand{\pathspace}{\Gamma}
\newcommand{\pathspaceR}{\Gamma^R}
\newcommand{\pathspaceL}{\Gamma^L}
\newcommand{\initpathsR}{\Gamma_{\mathrm{init}}^R}
\newcommand{\initpathsL}{\Gamma_{\mathrm{init}}^L}
\newcommand{\simplepathsR}{\Gamma_{\mathrm{sim}}^R}
\newcommand{\simplepathsL}{\Gamma_{\mathrm{sim}}^L}
\newcommand{\simplepaths}{\Gamma_{\mathrm{sim}}}
\newcommand{\tspathsR}{\Gamma_{\mathrm{t.s.}}^R}
\newcommand{\tspathsL}{\Gamma_{\mathrm{t.s.}}^L}
\newcommand{\tspaths}{\Gamma_{\mathrm{t.s.}}}
\newcommand{\al}{\alpha}
\newcommand{\gam}{\gamma}
\newcommand{\ep}{\varepsilon}
\newcommand{\de}{\delta}
\newcommand{\ka}{\kappa}
\newcommand{\vph}{\varphi}
\newcommand{\Om}{\Omega}
\newcommand{\si}{\sigma}
\newcommand{\thet}{\theta}
\newcommand{\C}{\mathbb{C}}
\newcommand{\D}{\mathbb{D}}
\newcommand{\HH}{\mathbb{H}}
\newcommand{\N}{\mathbb{N}}
\newcommand{\PP}{\mathbb{P}}
\newcommand{\Q}{\mathbb{Q}}
\newcommand{\R}{\mathbb{R}}
\newcommand{\cH}{\mathcal{H}}
\newcommand{\incto}{\uparrow}
\newcommand{\imag}{\operatorname{Im}}
\newcommand{\real}{\operatorname{Re}}
\newcommand{\f}{\frac}
\newcommand{\defeq}{:=}
\newcommand{\defeqr}{=:}
\newcommand{\pd}{\partial}
\newcommand{\sle}{\mathrm{SLE}}
\newcommand{\cdist}{d_*}
\newcommand{\cdisthaus}{\cdist^\cH}
\newcommand{\capacity}{\operatorname{cap}}
\newcommand{\caraconv}{\stackrel{\mathit{Cara}}{\longrightarrow}}
\newcommand{\dstrong}{d_{\mathcal U}}
\begin{document}
\begin{frontmatter}

\title{Strong path convergence from Loewner driving function convergence}
\runtitle{Path convergence from Loewner driving convergence}

\begin{aug}
\author[A]{\fnms{Scott} \snm{Sheffield}\corref{}\thanksref{t1}\ead[label=e1]{sheffield@math.mit.edu}} and
\author[B]{\fnms{Nike} \snm{Sun}\thanksref{t2}\ead[label=e2]{nikesun@stanford.edu}}
\runauthor{S. Sheffield and N. Sun}
\affiliation{Massachusetts Institute of Technology and Stanford University}
\address[A]{Department of Mathematics\\
Massachusetts Institute of Technology\\
Cambridge, Massachusetts 02139\\
USA\\
\printead{e1}}
\address[B]{Department of Statistics\\
Stanford University\\
Stanford, California 94305\\
USA\\
\printead{e2}}
\end{aug}

\thankstext{t1}{Supported in part by NSF Grants DMS-06-45585 and OISE-0730136.}
\thankstext{t2}{Supported in part by NSF Grant DMS-08-06211 and a
DMS--VIGRE grant to Stanford Statistics Department.}

\received{\smonth{4} \syear{2010}}
\revised{\smonth{10} \syear{2010}}

%
\begin{abstract}
We show that, under mild assumptions on the limiting curve, a sequence
of simple chordal planar curves converges uniformly whenever certain
Loewner driving functions converge. We extend this result to random
curves. The random version applies in particular to random lattice
paths that have chordal $\sle_\ka$ as a scaling limit, with $\ka<8$
(nonspace-filling).

Existing $\sle_\ka$ convergence proofs often begin by showing that the
Loewner driving functions of these paths (viewed from $\infty$)
converge to Brownian motion. Unfortunately, this is not sufficient, and
additional arguments are required to complete the proofs. We show that
driving function convergence \textit{is} sufficient if it can be
established for both parametrization directions and a generic
observation point.
\end{abstract}

%
\begin{keyword}[class=AMS]
\kwd[Primary ]{60J67}
\kwd[; secondary ]{30C35}
\kwd{31A15}.
\end{keyword}
\begin{keyword}
\kwd{Schramm--Loewner evolutions}
\kwd{Loewner driving convergence}
\kwd{capacity}.
\end{keyword}

\end{frontmatter}

\section{Introduction}
\label{sec:intro}

The Loewner differential equation, first described by Loewner in 1923,
relates a planar self-avoiding curve to a real-valued continuous
function (the ``Loewner driving function'') via conformal mappings. It
was discovered by Schramm in \cite{schrammlerw} that if one takes
the driving function to be $\sqrt{\ka} W_t$ for $W$ a standard Brownian
motion, then the resulting random curves---called the Schramm--Loewner
evolution with parameter $\ka$ and denoted $\sle_\ka$---are conformally
invariant in law and satisfy a certain Markovian property (the ``domain
Markov property''). They are furthermore the only curves with these
properties, making them the ``universal'' candidate for the scaling
limit of many discrete planar models in statistical physics. Indeed,
since their introduction \cite{schrammlerw}, a number of discrete
random paths have been shown to converge to $\sle_\ka$ in the scaling
limit: in particular, loop-erased random walks and uniform spanning
tree boundaries ($\sle_2$ and $\sle_8$) \cite{lswlerw}, Gaussian free
field level lines and the harmonic explorer ($\sle_4$)
\cite{schrammsheffieldexplorer,schrammsheffieldgff}, percolation
cluster boundaries ($\sle_6$) \cite{smirnovperc,smirnovperclong,camianewman}
and Ising spin interfaces and FK cluster boundaries
($\sle_3$ and $\sle_{16/3}$)
\cite{smirnovtowards,smirnovrc,smirnovrcii,kemppainensmirnovrciii,chelkaksmirnovisinguniversality,chelkaksmirnovising}.

In each of the cases where convergence has been proved, a strong form
of convergence has been obtained: when the random lattice paths are
conformally mapped to continuous random paths on a fixed domain, one
obtains, as the mesh size tends to zero, convergence in law with
respect to the uniform or supremum-norm metric (modulo
reparametrization of the curves), which we denote by $\dstrong$ (see
Section \ref{subsec:curve_metrics}).\setcounter{footnote}{2}\footnote{The metric $\dstrong$ is
also sometimes called the ``Fr\'echet distance'' (see \cite{altgodau}
for background). For consistency, we will use only the term ``uniform
metric'' here.} For random variables on a separable metric space, there
are several equivalent ways to define convergence in law (also referred
to as convergence in distribution or weak convergence): in our setting,
a natural formulation is via the Skorohod--Dudley theorem~\cite{dudleyskorohod},
which states that random variables converge in law
if and only if they can be defined on a joint probability space in
which they converge almost surely. When speaking of random curves, we
will sometimes use the phrase ``uniform convergence'' as a shorthand
for ``convergence in law with respect to the uniform metric.''

Most existing $\sle$ convergence proofs have shown a weaker form of
convergence first, that of convergence of the Loewner driving function,
and have then used additional estimates from the discrete model to
deduce uniform convergence
\cite{lswlerw,schrammsheffieldexplorer,schrammsheffieldgff}. (The
arguments in \cite{smirnovperc,smirnovperclong,camianewman} contend
with these issues in a slightly different way (see also \cite{sun} for
a survey).) The goal of this article is to provide a more general
criterion for deducing uniform convergence which is less dependent on
specific features of the model at hand.

Specifically, we show that Loewner driving function convergence
actually \textit{implies} uniform convergence provided it can be
established for both parametrization directions and with respect to a
generic target:
\begin{theorem}
Let $D$ be a smooth bounded simply connected planar domain with marked
boundary points $a$ and $b$ (distinct), and let $(\gam^j)$ be a
sequence of random simple paths in $D$ traveling from $a$ to $b$. For
each $x \in D$, let $\psi_x$ be a conformal map from $D$ to the unit
disc with $\psi_x(x) = 0$. Let $d_{x}^{R}$ be the metric on paths
avoiding $x$ defined by
\[
d_{x}^{R}(\gam_1, \gam_2) \defeq|T_1 - T_2| +
\|W_{x,t\wedge T_1}(\gam_1) - W_{x,t\wedge T_2}(\gam_2)\|
_\infty,
\]
where $W_{x,\cdot}(\gam_i)$ is the radial Loewner driving function for
$\psi_x \circ\gam_i$, and $[0,T_i]$ is the (necessarily finite)
interval on which this function is defined (see Section~\ref
{subsec:curve_metrics}). Suppose that for all $x$ in a countable dense
subset of $D$, the $\gam^j$ and their time reversals $\gam^{j-}$
converge in law with respect to $d_{x}^{R}$ to chordal $\sle_\ka$
from $a$ to $b$ and from $b$ to $a$, respectively. Then the $\gam^j$
converge in law to chordal $\sle_\ka$ with respect to $\dstrong$.
\end{theorem}

This theorem follows from a series of more general results for
deterministic and random curves that we state formally in Section \ref
{subsec:main_result} (see Corollary~\ref{cor:app_sle}; a stronger
result applies when $\kappa\leq4$; see Corollary \ref
{cor:app_sle_simple}). It tells us in particular that we do not need to
know a priori that the laws of the random paths have subsequential weak
limits with respect to the uniform metric. This kind of a priori
pre-compactness has been obtained for some models: for example,
Kemppainen and Kemppainen and Smirnov
\cite{kemppainen,kemppainensmirnov} give a sufficient pre-compactness
criterion based on crossing probability estimates and the arguments in
\cite{aizenmanburchard}. However, these estimates require extra work
and are nontrivial in general. The Loewner driving function convergence
that we do require can be derived (e.g., via the recipe used in
\cite{lswlerw,schrammsheffieldexplorer,schrammsheffieldgff}) as soon
as one has sufficient control of an approximately conformally invariant
``martingale observable.'' Establishing and properly estimating these
observables has been the most difficult step in the proofs obtained
thus far, but at least we can now say that (for models with a built-in
time-reversal symmetry) this step is sufficient.

As a somewhat less technical motivation for our work, we note that part
of the appeal of $\sle$ theory is its supposed ``universality''---the
idea that $\sle$ is somehow \textit{the} canonical scaling limit of the
random self-avoiding paths that appear in critical two-dimensional
statistical physics. Although existing $\sle$ convergence proofs apply
only in very specific contexts, one can argue that the more we replace
the model-specific arguments in these proofs by general ones, the more
evidence we have for (some sort of) universality.

In this section we will begin by reviewing the Loewner evolutions; we
then define some useful metrics on curves and state both deterministic
and random versions of our main result. In Section \ref{sec:counterex}
we present a series of counterexamples, showing that the hypotheses in
the deterministic version of our convergence theorem are in fact
necessary. In Section \ref{sec:driving_conv} we state some known
consequences of driving function convergence and prove some auxiliary
lemmas. In Section \ref{sec:unif_conv} we prove our main result for
deterministic curves, and in Section \ref{sec:unif_conv_random} we give
the extension to random curves. Finally, in Section \ref{sec:app_sle}
we describe the application of our result to the $\sle_\ka$ processes
for $\ka<8$.

\subsection{Loewner evolutions}
\label{subsec:loewner_review}

Let $\HH$ be the upper half plane. We have chosen to use $\HH$ as our
canonical domain (mapping all other paths into $\HH$) because it is the
most convenient domain in which to define chordal Loewner evolutions.
However, we will also consider radial Loewner evolutions which are most
conveniently defined on the unit disc $\D$, and we will use the Cayley
transform $\vph(z)\defeq\f{z-i}{z+i}$ to easily go back and forth
between the two domains. To make the completion of $\HH$ a compact
metric space, we will endow $\HH$ with the metric it inherits from $\D$
via the map $\vph$: namely, we will let $\cdist(\cdot,\cdot)$ denote
the metric on $\HH$ given by
\[
\cdist(z,w)\defeq|\vph(z)-\vph(w)|
\]
and write $\overline{\HH}$ for the completion of $\HH$ with respect to
$\cdist$ (equivalently, its closure in~$\hat\C$). The map $\vph$ gives
an isometry of $\overline{\HH}$ with $\overline{\D}$. If $z\in\HH
\cup
\R$,
then $\cdist(z_n,z)\to0$ is equivalent to $|z_n-z|\to0$, and $\cdist
(z_n,\infty)\to0$ is equivalent to $|z_n|\to\infty$.

We now briefly review the Loewner evolutions, beginning with the
chordal version (for a more detailed account see
\cite{ahlfors,marshallrohde,lawler}). Suppose $\gam\dvtx[0,1]\to\overline
{\HH}
$ is a~continuous simple path starting at $\gam(0)=0$ and traveling in
$\overline{\HH}$, with \mbox{$\gam(t)\in\HH$} for all $t\in(0,1)$. For each
$t\in
[0,1)$, there is a unique conformal equivalence $g_t\dvtx\HH\setminus\gam
[0,t]\to\HH$ satisfying the so-called \textit{hydrodynamic normalization}
at $\infty$,
\[
\lim_{z\to\infty} [g_t(z)-z]=0.
\]
The quantity
\[
\f{1}{2} \lim_{z\to\infty} z[g_t(z)-z]
\]
is called the \textit{half-plane capacity} of $\gam[0,t]$ (w.r.t. $\infty
$), denoted $\capacity_\infty\gam[0,t]$. It is real and (strictly)
monotone increasing in $t$. Schramm's version of Loewner's theorem
states that if $\gam$ is reparametrized so that $\capacity_\infty
\gam
[0,t]=t$, then the maps $g_t$ satisfy the \textit{chordal Loewner equation},
%
%
\begin{equation} \label{eq:chordal_LDE}
\dot g_t(z)=\f{2}{g_t(z)-W_t},\qquad g_0(z)=z,
\end{equation}
where $W_t=g_t(\gam(t))$. Since $\gam(t)$ is not in the domain of $g_t$
it needs to be checked that $W_t$ can be defined as a limit; this is
done, for example, in \cite{lawler}, Lemma 4.2. The function $W_t$ is
continuous in $t$ and defined for all finite $t$ with $t<\capacity
_\infty\gam$, and is referred to as the (\textit{chordal}) \textit{driving
function} of $\gam$. To avoid ambiguity we will write from now on
$g_{\infty,t}\defeq g_t$, $W_{\infty,t}\defeq W_t$, and we continue to
work with the parametrization of $\gam$ defined on $[0,1]$ (rather than
with the Loewner capacity parametrization). For clarity of exposition
we will impose the technical condition that $\capacity_\infty\gam
[0,1]\incto\infty$ as $t\incto1$.

We now describe the radial Loewner evolution, which is more
conveniently defined in the unit disc $\D$. Again, suppose $\gam
\dvtx[0,1]\to\overline{\D}$ is a continuous simple path starting at
$\gam
(0)=1$ and traveling in $\overline{\D}$, with $\gam(t)\in\D
\setminus
\{0\}$
for all $t\in(0,1)$. For each $t\in[0,1)$ we now choose $g_t$ to be the
unique conformal map $\D\setminus\gam[0,t]\to\D$ with $g_t(0)=0$ and
$g_t'(0)>0$. The quantity $\log g_t'(0)$ is denoted $\capacity\gam
[0,t]$; if $\gam(1)=0$, then $\capacity\gam[0,t]\incto\infty$ as
$t\incto1$. Loewner's theorem states in this case that under the
parametrization $\capacity\gam[0,t]=t$, the maps $g_t$ satisfy the
\textit{radial Loewner equation},
%
%
\begin{equation} \label{eq:radial_LDE}
\dot g_t(z) = g_t(z) \f{e^{i W_t} + g_t(z)}{e^{i W_t} - g_t(z)},\qquad
g_0(z)=z,
\end{equation}
where $e^{iW_t}=g_t(\gam(t))$. Again this is continuous in $t$ and
defined for all finite~$t$ with $t<\capacity\gam$, and we will refer to
it as the (\textit{radial}) \textit{driving function} of $\gam$.

We note that it can be shown (using Schwarz reflection, see
\cite{lawler}, Section~4.1) that the Loewner differential equations (\ref
{eq:chordal_LDE}) and (\ref{eq:radial_LDE}) extend to points on the
boundary of the domain minus the starting point of the curve.

We can also try to reverse the above procedure: given a continuous
function $W_{\infty,t}$, we can solve (\ref{eq:chordal_LDE}) to obtain
the chordal Loewner maps $g_{\infty,t}$. For each $z\in\overline{\HH}$,
$g_{\infty,t}(z)$ is well defined up to the time that it collides with
$W_{\infty,t}$. Define the \textit{filling process} by
\[
K_{\infty,t}=\{z\in\overline{\HH}\dvtx g_{\infty,t}(z) \mbox{ not
defined at time } t\}
\]
and set $\HH_{\infty,t}=\HH\setminus K_{\infty,t}$. The question then
is whether there exists a curve~$\gam$ which \textit{generates} this
process, that is, such that for some parametrization of~$\gam$, $\HH
_{\infty,t}$ is the unique unbounded component of $\HH\setminus\gam
[0,t]$ for all $t$. We can do the same in the radial case (in the unit
disc), where we will denote the fillings by $C_t$ and set $\D_t=\D
\setminus C_t$. It is well known (see, e.g., \cite{lawler}) that there
exist continuous driving functions which give rise to filling processes
that are not generated by any curve, and it is trivial to construct a
curve which cannot arise from a continuous driving function (e.g., a
curve that retraces itself).

The definitions of $\capacity$, $C_t$, $\D_t$, $g_t$ and~$W_t$ (and
$\capacity_\infty$, $K_{\infty,t}$, $\HH_{\infty,t}$, $g_{\infty,t}$
and~$W_{\infty,t}$) above can be easily transferred to other (simply
connected) domains via conformal mapping. In particular, since we are
interested in curves traveling in $\overline{\HH}$, we will define a
capacity in $\HH$ \textit{with respect to $i$} by $\capacity_i
K=\capacity
\vph K$. We define a filling process with respect to $i$ by
$K_{i,t}(\gam)=\vph^{-1} C_t(\vph\gam)$, and we also write $\HH
_{i,t}=\HH\setminus K_{i,t}$. We define a driving function with respect
to~$i$ by $W_{i,t}(\gam)=W_t(\vph\gam)$, and we define a radial Loewner
chain $(g_{i,t})_t$ for~$\gam$ with respect to $i$ by $g_{i,t}=\vph
^{-1}\circ g_t\circ\vph$, where $(g_t)$ is the standard radial Loewner
chain corresponding to $\vph\gam$ [i.e., $(g_t)$ solves (\ref
{eq:radial_LDE}) with the driving function~$W_{i,t}$]. Similarly, for
general $x\in\HH$, we define $\capacity_x$, $K_{x,t}$, $\HH_{x,t}$,
$g_{x,t}$ and $W_{x,t}$ for $\gam$ via the unique automorphism $\psi_x$
of $\HH$ with $\psi_x(x)=i$, $\psi_x'(x)>0$. In particular, $\HH_{x,t}$
is the unique component of $\HH\setminus\gam[0,t]$ (where $\capacity
_x\gam[0,t]=t$) containing $x$, and $g_{x,t}$ is the unique conformal
map $\HH_{x,t}\to\HH$ which fixes $x$ and has $g_{x,t}'(x)>0$.

We can make similar definitions for the chordal case: in what follows,
we will generally consider curves traveling in $\overline{\HH}$ between
$-1$ and $1$, so we will let
\[
\psi_1 \dvtx z \mapsto\f{z+1}{z-1},\qquad
\psi_{-1} \dvtx z \mapsto\f{z-1}{z+1},
\]
so $\psi_1$ is a conformal automorphism of $\HH$ taking $1\mapsto
\infty
$ and $-1\mapsto0$, and $\psi_{-1}$ is a conformal automorphism of
$\HH
$ taking $-1\mapsto\infty$ and $1\mapsto0$. (We will often use $-1$ and
$1$ as endpoints---instead of $0$ and $\infty$---because it makes the
symmetry between the two parametrization directions slightly more
apparent.) For all other $x\in\R$ we let $\psi_x$ denote the unique
conformal automorphism of $\HH$ taking $x\mapsto\infty$ and fixing
$\{
\pm1\}$. Through the maps $\psi_x$ and using the chordal Loewner
evolution in the upper half-plane we can define $\capacity_x$,
$K_{x,t}$, $\HH_{x,t}$, $g_{x,t}$ and $W_{x,t}$ for $\gam$ traveling
from $-1$ to $1$ exactly as in the radial case.

\subsection{Families of curves}
\label{subsec:families}

We regard curves as continuous, nonlocally constant functions $f
\dvtx[0,1] \to\C$ (with respect to $\cdist$), taken modulo time
repara\-metrization: if $f_1,f_2\dvtx[0,1]\to\C$, we will say that the $f_i$
are \textit{the same up to reparametrization}, denoted $f_1\sim f_2$, if
there exists a continuously increasing bijection $\phi\dvtx[0,1]\to[0,1]$
such that $f_2=f_1\circ\phi$. A (\textit{directed}) \textit{curve} $\gam$ is then
defined to be an equivalence class modulo $\sim$. We often abuse
notation and write $\gam$ when we mean a particular parametrization of
$\gam$; to indicate the latter meaning we write $\gam\dvtx[0,1]\to\C$. We
write $\gam^-$ for the time reversal of~$\gam$. For any two curves
$\eta
_1,\eta_2$ such that the terminal point of $\eta_1$ is the initial
point of $\eta_2$, we will let $\eta_1\eta_2$ denote the concatenation
of these two curves. We will also use the notation $\gam[0,t]$ to
denote both the set $\gam([0,t])$ and the curve $\gam$ run up to time
$t$; the meaning should be clear from context.

Now let $\gam\dvtx[0,1]\to\overline{\HH}$ be a curve traveling between $-1$
and $1$ (in either direction), such that $\gam$ does not reach its
terminal point before time \mbox{$t=1$}. We will say that $\gam$ is
\textit{continuously driven with respect to $x$} if it arises from a
continuous driving function with respect to $x$. (A curve $\gam$ will
be continuously driven with respect to $x$ if its filling process
$K_{x,t}$ is continuously increasing; see \cite{lawler}, Section 4.1.)
We will say simply that $\gam$ is \textit{continuously driven} if it is
continuously driven with respect to its terminal point: such a curve
does not return into regions which are ``cut off'' from the terminal
point by $\gam$. If $\gam$ is continuously driven, then for any $x\in
\overline{\HH}$ which does not lie on $\gam$, $\gam$ can be parametrized
according to $\capacity_x$ up to time $\capacity_x\gam$, that is, up to
the infimum of times $t$ such that the point $x$ and the terminal point
of $\gam$ no longer lie in the closure of the same component of $\HH
\setminus\gam[0,t]$. In this case the reparametrized filling process of
$\gam$ corresponds to the curve $\tilde\gam$ which is the curve
$\gam$
stopped at time $\capacity_x\gam$, and $\tilde\gam$ is continuously
driven with respect to $x$. Moreover, $\tilde\gam$ is precisely the
entire portion of $\gam$ which is ``harmonically visible from $x$'':
after $\tilde\gam$ is traveled, a region containing $x$ is cut off and
$\gam$ does not re-enter this region. Thus every closed initial segment
of $\gam$ will be visible to some $x\in\HH\setminus\gam$, which does
not necessarily hold if $\gam$ is not continuously driven. Finally, we
will say that a curve is \textit{bidirectionally continuously driven} if
both $\gam$ and its time reversal $\gam^-$ are continuously driven.

We restrict our consideration to continuously driven curves traveling
between $-1$ to $1$ in $\overline{\HH}$ (this includes curves with
boundary intersections and self-intersections). It will be useful to
fix a countable dense subset $\Psi$ of~$\HH$; we then let
$\pathspaceR
=\pathspaceR_\Psi$ (resp., $\pathspaceL=\pathspaceL_\Psi$) denote the
space of all directed, continuously driven curves traveling from $-1$
to $1$ (resp., $1$ to $-1$) which avoid~$\Psi$. We let $\pathspace= \{
\gam\in\pathspaceR\dvtx\gam^-\in\pathspaceL\}$ denote the space of
bidirectionally continuously driven curves traveling from $-1$ to $1$.

If $\gam\in\pathspaceR$, we will let $\HH_t(\gam)\defeq\HH
_{1,t}(\gam
)$, $K_t(\gam)\defeq K_{1,t}(\gam)$ and so on. For $x\in\Psi$, we will
let $\tau_x=\tau_x(\gam)$ denote the infimum of times $t$ (under the
$\capacity_1$ parametrization) such that $x$ does not lie in the
closure of $\HH_t(\gam)$; that is, $\tau_x$ is the first time that $x$
is cut off from $1$ by $\gam$. If two curves $\gam_1,\gam_2\in
\pathspaceR$ agree for all times up to $\tau_x(\gam_1)$ [in which case
$\tau_x(\gam_1)=\tau_x(\gam_2)$], we will say that they are
\textit{equivalent viewed from $x$}, and write $\gam_1\sim_x\gam_2$.

We let $\simplepathsR$ denote the subspace of curves $\gam$ traveling
from $-1$ to $1$ such that $\gam$ is simple and boundary-avoiding. We
likewise define $\simplepathsL$ and $\simplepaths$; clearly these three
spaces are equivalent. For $\gam\in\pathspaceR$ parametrized by
$\capacity_1$ (or $\gam\in\pathspaceL$ parametrized by $\capacity
_{-1}$), we will say that $t$ is a \textit{disconnecting time} if $\gam
[0,t]\cap\gam[t,\infty)$ is totally disconnected, that is, has no
nontrivial connected components. We say that $\gam$ is
\textit{time-separated} if every time is a disconnecting time, and we let
$\tspathsR$ denote the subspace of curves $\gam\in\pathspaceR$ which
are time-separated. (This definition will be motivated later: see
Example \ref{ex:spikes} and Figure \ref{fig:spikes}. We remark that it
is easy to see that space-filling curves are not time-separated,
although they may be continuously driven.) Note the trivial inclusions
$\simplepathsR\hookrightarrow\tspathsR\hookrightarrow\pathspaceR$. We
make all these definitions symmetrically for $\gam\in\pathspaceL$, and
we let $\tspaths=\{\gam\in\tspathsR\dvtx\gam^-\in\tspathsL\}$ denote the
space of time-separated curves which are bidirectionally continuously driven.

\subsection{Metrics on the space of curves}
\label{subsec:curve_metrics}

In this section we introduce the distance functions which we will
consider on the space of curves. For two compact sets $A,B\subset\C$,
we have the $\cdist$-induced \textit{Hausdorff distance}
\[
\cdisthaus(A,B)
\defeq\inf\{\ep>0 \dvtx A \subset\cN(B,\ep) \mbox{ and }
B \subset\cN(A,\ep) \},
\]
where $\cN(A,\ep)$ denotes the open $\ep$-neighborhood of $A$ with
respect to the~met\-ric~$\cdist$, that is, $\cN(A,\ep) = \bigcup
_{a\in A} B_{\ep}(a)$, where \mbox{$B_{\ep}(a)\defeq\{z\in\C
\dvtx\cdist
(z,a)<\ep\}$}. For example, for two curves traveling in a metric space,
we can measure their proximity by the Hausdorff distance between their
image sets. If $\cH(\overline{\HH})$ denotes the set of all nonempty
compact subsets of $\overline{\HH}$ (with metric $\cdist$), then~$\cdisthaus$ makes $\cH(\overline{\HH})$ into a compact metric space.
However, most often we are interested in a finer notion of proximity
for curves which takes into account the order in which points are
visited. We therefore define a distance\vadjust{\goodbreak} function on the space of curves by
%
%
\begin{equation} \label{eq:unif_metric_curves}
\dstrong(\gam_1,\gam_2)
\defeq\inf_\phi\Bigl[
\sup_{0 \le t \le1}
\cdist\bigl(f_1\circ\phi(t), f_2(t)\bigr)
\Bigr],
\end{equation}
where $f_i$ is any function in the equivalence class $\gam_i$, and the
infimum is taken over all reparametrizations $\phi$ which are
continuously increasing \textit{bijections} of $[0,1]$. It can be checked
that $\dstrong$ is well defined and gives a metric on the space of
curves. We will refer to $\dstrong$ as the \textit{uniform metric}, and to
the topology it generates as the \textit{uniform topology}.

Our goal is to deduce convergence in this uniform topology from driving
function convergence. For $x\in\overline{\HH}$, denote by $d_{x}^{R}$
the distance function on $\pathspaceR$ which is defined by
%
%
\begin{equation} \label{eq:driving_metric}
d_{x}^{R}(\gamma_1, \gamma_2) \defeq|T_1 - T_2| +
\|W_{x,t\wedge T_1}(\gamma_1) - W_{x,t\wedge T_2}(\gamma_2)
\|_\infty,
\end{equation}
where $T_j\defeq\capacity_x\gamma_j$. (For $x\in\HH$ we use the radial
driving functions; for \mbox{$x\in\R$} we use the chordal versions.) Observe
that $d_{x}^{R}(\gam_1,\gam_2) = d_{i}^{R}(\psi_x\gam_1,\psi
_x\gam
_2)$, and that distinct paths $\gamma_1,\gamma_2\in\pathspaceR$ have
$d_{x}^{R}(\gam_1,\gam_2) = 0$ if and only if $\gam_1\sim_x\gam_2$.
It follows easily that $d_{x}^{R}$ is a metric on $\pathspaceR/\sim
_x$; in a slight abuse of language we will say that $\gam^j$ converges
to $\gam$ with respect to $d_{x}^{R}$ in $\pathspaceR$ if
$d_{x}^{R}(\gam^j,\gam)\to0$, that is, if convergence holds in
$\pathspaceR
/\sim_x$. We define similarly, for each $x$, the distance function
$d_{x}^{L}$ on $\pathspaceL$. Finally, if $\gam^j,\gam\in
\pathspaceR
$, their driving functions~$W_t^j,W_t$ with respect to the terminal
point $1$ are defined for all $t\ge0$. We let~$\dcapf$ be a metric on
$\pathspaceR$ such that $\dcapf(\gam^j,\gam)\to0$ if and only if
$W_t^j$ converges uniformly to $W_t$ on bounded intervals; we leave it
to the reader to verify that such a metric can be constructed. We
define likewise $\dcapb$ on $\pathspaceL$.

\subsection{Main result}
\label{subsec:main_result}

$\!\!\!$We now describe the main results of this paper. Throughout we will let
$(\gam^j)$ denote a sequence in $\simplepaths$, as is the case in
applications of interest. Our main deterministic result is the following:
\begin{theorem} \label{thm:unif_conv}
Let $\Psi$ be any countable dense subset of $\HH$, and let $(\gam^j)$
be a sequence in $\simplepaths$ such that for every $x\in\Psi$, we have
\[
\lim_{j\to\infty} d_{x}^{R}(\gam^j,\eta^x)
= \lim_{j\to\infty} d_{x}^{L}(\gam^{j-},\xi^x)
= 0
\]
for some fixed $\eta^x\in\tspathsR$, $\xi^x\in\tspathsL$. Then there
exists a curve $\gam\in\tspaths$ such that $\gam^j\to\gam$ with respect
to $\dstrong$. Moreover each $\hat\eta^x\defeq\eta^x[0,\tau
_x(\eta^x)]$
is an initial segment of $\gam$ while each $\hat\xi^x\defeq\xi
^x[0,\tau
_x(\xi^x)]$ is a concluding segment (up to the inclusion of endpoints),
and $\gam=\bigcup_x\hat\eta^x=\bigcup_x\hat\xi^x$ (up to the inclusion
of endpoints), where $\bigcup_x \hat\eta^x$ means the minimal curve of
which each $\hat\eta^x$ is an initial segment.
\end{theorem}
\begin{rmk}
In the theorem above, no a priori compatibility of the~$\eta^x,\xi^x$
is assumed. Note that according to our definitions of $\tspathsR$ and
$\tspathsL$, $\eta^x$ and $\xi^x$ travel between $-1$ and $1$, but are
uniquely specified only up to~$\sim_x$ (and thus are represented by
their initial segments $\hat\eta^x,\hat\xi^x$ stopped at the swallowing
time of $x$).
\end{rmk}

A substantially simpler criterion can be applied in the case when the
limiting curve is simple:
\begin{ppn} \label{ppn:unif_conv_simple}
Let $(\gam^j)$ be a sequence in $\simplepaths$ such that $\dcapf
(\gam
^j,\eta)\to0$ and $\dcapb(\gam^j,\xi)\to0$ for $\eta\in
\simplepathsR$,
$\xi\in\simplepathsL$. Then $\eta=\xi^-\defeqr\gam$ and $\gam
^j\to\gam
$ with respect to $\dstrong$.
\end{ppn}

For the general (nonsimple) case, Section \ref{sec:counterex} contains
a list of examples which show that the hypotheses in Theorem
\ref{thm:unif_conv} are necessary. We will exhibit the following:
\begin{longlist}[Example 2.5]
\item[Example \ref{ex:fwd_insufficient}.] $\gam\in\simplepaths$,
$d_{x}^{R}(\gam^j,\gam)\to0$ for all $x \in\Psi$, but $(\gam^j)$ not
$\dstrong$-Cauchy.\vspace*{2pt}
\item[Example \ref{ex:bidir.insuff}.] $\gam\in\tspaths$, $\dcapf
(\gam
^j,\gam)\to0$ and $\dcapb(\gam^{j-},\gam^-)\to0$, but $(\gam^j)$ not
$\dstrong$-Cauchy.
\item[Example \ref{ex:spikes}.] $\gam\in\pathspace$,
$d_{x}^{R}(\gam
^j,\gam)\to0$ and $d_{x}^{L}(\gam^{j-},\gam^-)\to0$ for all $x
\in
\Psi$, but~$(\gam^j)$ not $\dstrong$-Cauchy.
\item[Example \ref{ex:bidir.dense.insuff_ctsdriven}.] $\gam_1\in
\pathspaceR$, $\gam_2\in\pathspaceL$, $\gam_1\ne\gam_2$, but
$d_{x}^{R}(\gam^j,\gam_1)\to0$ and $d_{x}^{L}(\gam^{j-}$, $\gam
^-_2)\to0$
for all $x \in\Psi$.
\end{longlist}
Example~\ref{ex:fwd_insufficient} is a well-known example (essentially
the same as \cite{lawler}, Examp\-le~4.49) which shows that even in the
case that $\gam$ is simple without boundary intersections, one cannot
replace the $d_{x}^{R}$ and $d_{x}^{L}$ convergence (for all
$x\in
\Psi$) required in Theorem~\ref{thm:unif_conv} with $d_{x}^{R}$
convergence alone. The other examples are new to this paper.
Example \ref{ex:bidir.insuff} shows that in the first half of
Theorem \ref{thm:unif_conv}, for~$\gam$ with boundary intersections and
self-intersections permitted, one cannot replace $d_{x}^{R}$ and
$d_{x}^{L}$ convergence (for all $x \in\Psi$) with $\dcapf$ and
$\dcapb$ convergence. It is indeed necessary to consider points $x$
other than the two endpoints of the path. (We remark, however, that
$d_{x}^{R}$ convergence to $\eta^x$ automatically implies $d_{x'}^{R}$
convergence to $\eta^x$ whenever $x$ and $x'$ lie in the same
component of $\HH\setminus\eta^x$; thus it is enough for $\Psi$ to
include one $x$ in each component of $\HH$ minus the Hausdorff limit of
the $\gamma^j$, provided that the union of these components is dense.)
Example \ref{ex:spikes} shows that, in the first half of Theorem
\ref{thm:unif_conv}, one cannot replace $\tspathsR$ and $\tspathsL$ with
$\pathspaceR$ and $\pathspaceL$; that is, one cannot remove the
time-separation condition. Finally, Example \ref
{ex:bidir.dense.insuff_ctsdriven} shows that without the
time-separation condition in Theorem \ref{thm:unif_conv}, it is
possible for the $d_{x}^{R}$ limits to be incompatible with the
$d_{x}^{L}$ limits. This indicates that some care will be required to show
that the $\eta^x$ and $\xi^x$ in Theorem \ref{thm:unif_conv} are
compatible with one another, and that they uniquely determine $\gam$ in
the sense described.

Readers with a fondness for puzzles may attempt to construct these
examples themselves before reading Section \ref{sec:counterex}. Readers
with limited time or patience for examples may proceed directly to
Section \ref{sec:driving_conv}; the remainder of the paper is logically
independent of Section \ref{sec:counterex}. Using standard topological
arguments, we extend Theorem \ref{thm:unif_conv} to random curves in
Section \ref{sec:unif_conv_random}.
\begin{theorem} \label{thm:unif_conv_random}
Let $(\gam^j)$ be a sequence of random curves in $\simplepaths$ such
that for every $x\in\Psi$, the $\gam^j$ (resp., $\gam^{j-}$) converge
in law with respect to~$d_{x}^{R}$ (resp.,~$d_{x}^{L}$) to a~random
curve $\eta^x\in\tspathsR$ (resp., $\xi^x\in\tspathsL$). Then the
$\gam
^j$ converge in law to a random curve $\gam\in\tspaths$ with respect to
$\dstrong$. This $\gam$ can be coupled with the curves $\hat\eta
^x\defeq
\eta^x[0,\tau_x(\eta^x)]$ and $\hat\xi^x\defeq\xi^x[0,\tau
_x(\xi^x)]$
in such a~way that each $\hat\eta^x$ is an initial segment of $\gam$,
each $\hat\xi^x$ is a concluding segment, and $\gam=\bigcup_x\hat
\eta
^x=\bigcup_x\hat\xi^x$ up to the inclusion of endpoints.
\end{theorem}

In Section \ref{sec:app_sle} we will see that this result applies in
particular to the case that $\gam$ is a (chordal) $\sle_\ka$ for some
$\ka<8$.
\begin{cor} \label{cor:app_sle}
Let $(\gam^j)$ be a sequence of random curves in $\simplepaths$ such
that for every $x\in\Psi$, the $\gam^j$ (resp., $\gam^{j-}$) converge
in law with respect to $d_{x}^{R}$ (resp., $d_{x}^{L}$) to $\sle
_\ka
$ (for $\ka<8$) traveling from $-1$ to $1$ (resp., from $1$ to $-1$) in
$\overline{\HH}$. Then the $\gam^j$ converge in law to $\sle_\ka$ with
respect to $\dstrong$. Furthermore, this implies that $\sle_\ka$ is
time reversible (for this particular value of $\ka$), that is, that the
law of the time-reversal of an $\sle_\ka$ from $-1$ to $1$ is an
$\sle
_\ka$ from $1$ to $-1$.
\end{cor}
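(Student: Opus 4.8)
The plan is to apply Theorem~\ref{thm:unif_conv_random} with chordal $\sle_\ka$ playing the role of the common limiting curve, then identify the resulting $\dstrong$-limit with $\sle_\ka$, and finally extract time-reversibility by running the same argument on the time-reversed sequence. To set up the first step, take $\eta^x$ to be chordal $\sle_\ka$ from $-1$ to $1$ in $\closure\H$ and $\xi^x$ to be chordal $\sle_\ka$ from $1$ to $-1$, both independent of $x$; the hypothesis of the corollary is exactly the required convergence in law of $\gam^j$ (resp.\ $\gam^{j-}$) to these with respect to $\dcap{R}{x}$ (resp.\ $\dcap{L}{x}$). What must still be checked is that $\eta^x\in\tspathsR$ and $\xi^x\in\tspathsL$ almost surely. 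Since $\ka<8$, the $\sle_\ka$ trace is a.s.\ a continuous, non-space-filling curve, so it a.s.\ avoids the countable set $\Psi$; and by construction its Loewner driving function with respect to its target endpoint is $\sqrt\ka$ times a Brownian motion (suitably conjugated) while the associated filling process increases continuously, so it is continuously driven with respect to its terminal point. The one genuinely model-specific point is that $\sle_\ka$ is a.s.\ \emph{time-separated}, i.e.\ $\gam[0,t]\cap\gam[t,\infty)$ is a.s.\ totally disconnected for every capacity time $t$: this is immediate for $\ka\le4$ since the trace is simple (the intersection is a single point), and for $4<\ka<8$ it follows from the known fine structure of the trace --- in particular that it never retraces an arc and that its self- and boundary-intersection sets are small (see \cite{lawler}). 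Granting this, Theorem~\ref{thm:unif_conv_random} produces a random curve $\gam\in\tspaths$ with $\gam^j\to\gam$ in law with respect to $\dstrong$, together with a coupling of $\gam$ with curves $\hat\eta^x$, each distributed as $\sle_\ka$ from $-1$ to $1$ stopped at the swallowing time $\tau_x$, such that each $\hat\eta^x$ is an initial segment of $\gam$ and $\gam=\bigcup_x\hat\eta^x$.

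Next I would identify $\gam$. In the coupling above $\hat\eta^x=\gam[0,\tau_x(\gam)]$, so $[\gam]_{\sim_x}\overset{d}{=}[\sle_\ka]_{\sim_x}$ for every $x\in\Psi$; I claim these single-point marginal laws already determine the law of any curve in $\tspathsR$. The reconstruction $\gam=\bigcup_x\hat\eta^x$ --- valid because $\gam$ is time-separated, which is precisely the role of that hypothesis (cf.\ Example~\ref{ex:spikes}) --- exhibits $\gam$ as a measurable function of the family $([\gam]_{\sim_x})_{x\in\Psi}$, so it suffices to pin down the finite-dimensional joint laws of that family. But the restrictions $\hat\eta^{x_1},\dots,\hat\eta^{x_n}$ are always totally ordered by inclusion, and from a single restriction $[\gam]_{\sim_{x_k}}$ one can read off whether each $x_i$ has already been swallowed and, if so, recover $[\gam]_{\sim_{x_i}}$; hence both the index realizing $\max_i\tau_{x_i}$ and the entire $n$-tuple are measurable functions of the longest restriction, so the law of $(\hat\eta^{x_1},\dots,\hat\eta^{x_n})$ is a fixed function of the individual marginal laws. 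The same reconstruction applies verbatim to $\sle_\ka$ from $-1$ to $1$ (also in $\tspathsR$), so the agreement of every single-point marginal forces $\gam\overset{d}{=}\sle_\ka$ from $-1$ to $1$ with respect to $\dstrong$; this is the first assertion of the corollary. (For $\ka\le4$ the argument collapses: no interior point is ever swallowed, so a single $x$ already determines the whole curve.)

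For time-reversibility, observe that $(\gam^{j-})$ satisfies the hypotheses of the corollary with the two parametrization directions exchanged --- its time-reversals are the $\gam^j$, which converge to $\sle_\ka$ from $-1$ to $1$ --- so the preceding argument applies again and produces $\tilde\gam\overset{d}{=}\sle_\ka$ from $1$ to $-1$ with $\gam^{j-}\to\tilde\gam$ in law with respect to $\dstrong$. Since time-reversal is a $\dstrong$-isometry, $\gam^j\to\gam$ in law also forces $\gam^{j-}\to\gam^-$ in law, whence $\gam^-\overset{d}{=}\tilde\gam\overset{d}{=}\sle_\ka$ from $1$ to $-1$; as $\gam^-$ is a deterministic function of $\gam\overset{d}{=}\sle_\ka$ from $-1$ to $1$, this says precisely that the time-reversal of an $\sle_\ka$ from $-1$ to $1$ is an $\sle_\ka$ from $1$ to $-1$. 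I expect the main obstacle to be the model-specific input of the first paragraph --- establishing that $\sle_\ka$ is a.s.\ time-separated for $4<\ka<8$ from the known properties of the trace (the other required properties being standard) --- while the measure-theoretic reduction of the second paragraph, that single-point marginals determine the law in $\tspathsR$ via the reconstruction supplied by Theorem~\ref{thm:unif_conv_random}, is a secondary point needing care.
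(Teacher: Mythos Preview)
Your approach is essentially the paper's: verify that $\sle_\ka$ lies in $\tspathsR$ (resp.\ $\tspathsL$) and invoke Theorem~\ref{thm:unif_conv_random}. The paper's proof of the corollary is literally one sentence citing Lemma~\ref{lem:time_sep} and Theorem~\ref{thm:unif_conv_random}; two points of comparison are worth making.

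First, the time-separation for $4<\ka<8$ that you flag as the main obstacle is exactly what the paper isolates as Lemma~\ref{lem:time_sep}. The argument there is not the generic ``fine structure'' appeal you suggest (the reference \cite{lawler} alone does not give it); rather it goes through Dub\'edat's bead decomposition of $\sle_{\ka;\ka-4,\ka-4}$ \cite{dubedat} to show that the intersection of the left and right boundaries of $K_t$ is a.s.\ totally disconnected, then uses absolute continuity between $\sle_\ka$ excursions and $\sle_{\ka;\ka-4,\ka-4}$ \cite{schramm.wilson} together with the fact that $\sle_\ka\cap\R$ is totally disconnected. Your instinct that this is the substantive step is right, but the specific input is sharper than you indicate.

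Second, your identification argument in the second paragraph is correct but heavier than needed. You argue that the finite-dimensional marginals $([\gam]_{\sim_{x_1}},\dots,[\gam]_{\sim_{x_n}})$ are determined by the single-point marginals via the nesting structure; this works (with a little care about ties in the $\tau_{x_i}$), but the paper's route---already built into the proof of Theorem~\ref{thm:unif_conv_random}---is simply to take $x$ close to the terminal point $1$, so that $\hat\eta^x$ approximates the whole curve and its marginal law alone pins down the law of $\gam$. The reversibility then drops out directly from the same theorem, since the coupling simultaneously realizes $\gam=\bigcup_x\hat\eta^x=\bigcup_x\hat\xi^x$ with the $\hat\xi^x$ having the marginals of $\sle_\ka$ from $1$ to $-1$; no second application to $(\gam^{j-})$ is required.
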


Our results indicate a general method for proving uniform convergence
of discrete curves to $\sle$: if one can establish $d_{x_0}^{R}$
convergence for a sequence of random paths with respect to an arbitrary
fixed interior point $x_0$, this immediately implies $d_{x}^{R}$
convergence with respect to a countable dense collection of fixed
interior points $x\in\Psi$; if one also proves $d_{x_0}^{L}$
convergence (again for~$x_0$ generic), then Corollary~\ref{cor:app_sle}
yields the desired convergence in law with respect to $\dstrong$. It
was proven in \cite{zhan} that the time reversal of $\sle_\ka$ is again
$\sle_\ka$ for $\ka\le4$, and the same is believed true for $4<\ka<8$
but is not known. Nevertheless, we expect Corollary \ref{cor:app_sle}
to apply in cases where the symmetry of the $\gam^j$ and $\gam^{j-}$ is
intrinsic to the model. (Examples include the Ising model spin
interfaces, the FK cluster boundaries, the percolation interfaces and
the level lines of the Gaussian free field.) If a discrete model did
not have such a time-reversal symmetry---and one only had direct access
to the driving functions for one parametrization direction (as is the
case, e.g., for the harmonic explorer
\cite{schrammsheffieldexplorer})---one could in principle use
Theorem~\ref{thm:unif_conv_random} to prove convergence to $\sle_\ka$ without first
proving (or in the process establishing) a~reversibility result:
\begin{cor} \label{cor:app_sle_generalized}
Let $(\gam^j)$ be a sequence of random curves in $\simplepaths$ such
that for every $x\in\Psi$, the $\gam^j$ converge in law with respect to
$d_{x}^{R}$ to $\sle_\ka$ (for $\ka<8$) traveling from $-1$ to
$1$ in
$\overline{\HH}$, and the $\gam^{j-}$ have subsequential limits in law
with respect to $d_{x}^{L}$ which lie in $\tspathsL$. Then the
$\gam
^j$ converge in law to $\sle_\ka$ with respect to $\dstrong$.
\end{cor}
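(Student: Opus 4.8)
The strategy is to deduce the result from Theorem~\ref{thm:unif_conv_random} by a subsequence argument; the role of that argument is to promote the assumed subsequential compactness of the time reversals to genuine convergence in law, by identifying every subsequential limit as $\sle_\ka$. Since $\simplepaths$ is a separable metric space under $\dstrong$, convergence in law with respect to $\dstrong$ is metrizable, so it suffices to show that every subsequence of $(\gam^j)$ has a further subsequence along which $\gam^j$ converges in law with respect to $\dstrong$ to a chordal $\sle_\ka$ from $-1$ to $1$.

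Fix such a subsequence. Enumerating $\Psi=\{x_1,x_2,\ldots\}$ and applying a diagonal extraction to the assumed subsequential weak limits of the $\gam^{j-}$ with respect to the metrics $\dcap{L}{x_m}$ (which all lie in $\tspathsL$), we pass to a further subsequence --- still written $(\gam^j)$ --- along which $\gam^{j-}$ converges in law with respect to $\dcap{L}{x}$ to some $\xi^x\in\tspathsL$ for every $x\in\Psi$. By hypothesis the $\gam^j$ themselves converge in law with respect to $\dcap{R}{x}$ to an $\sle_\ka$ from $-1$ to $1$, which --- using that $\sle_\ka\in\tspathsR$ for $\ka<8$, as established in Section~\ref{sec:app_sle} (and needed already to state Corollary~\ref{cor:app_sle}) --- we regard as a random curve $\eta^x\in\tspathsR$ whose initial segment $\hat\eta^x\defeq\eta^x[0,\tau_x(\eta^x)]$ is distributed as an $\sle_\ka$ stopped at its first swallowing time of $x$. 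Theorem~\ref{thm:unif_conv_random} now applies along this subsequence: it produces a random curve $\gam\in\tspaths$ with $\gam^j\to\gam$ in law with respect to $\dstrong$, realized together with the $\hat\eta^x$ in a coupling in which each $\hat\eta^x$ is an initial segment of $\gam$ and $\gam=\bigcup_x\hat\eta^x$ up to endpoints. In that coupling $\hat\eta^x=\gam[0,\tau_x(\gam)]$, so $\gam$ is a measurable function of the family $(\hat\eta^x)_{x\in\Psi}$.

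It remains to identify the law of $\gam$. Let $\tilde\gam$ be a chordal $\sle_\ka$ from $-1$ to $1$; since $\tilde\gam\in\pathspaceR$ reaches $1$ and $\Psi$ is dense, we likewise have $\tilde\gam=\bigcup_x\hat{\tilde\gam}^x$ up to endpoints with $\hat{\tilde\gam}^x\defeq\tilde\gam[0,\tau_x(\tilde\gam)]$, and $\hat{\tilde\gam}^x\stackrel{d}{=}\hat\eta^x$ for every $x$ by the meaning of $\dcap{R}{x}$-convergence to $\sle_\ka$. The key point is a deterministic fact: whenever $(\rho^x)_{x\in\Psi}$ is the family of ``views'' $\rho^x=\rho[0,\tau_x(\rho)]$ of a single curve $\rho\in\pathspaceR$, the law of $\rho$ is determined by the individual laws $\{\mathrm{Law}(\rho^x):x\in\Psi\}$. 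Granting this, applying it to $(\hat\eta^x)_{x\in\Psi}$ and to $(\hat{\tilde\gam}^x)_{x\in\Psi}$ gives $\gam\stackrel{d}{=}\tilde\gam$, completing the proof. To prove the fact, pass to the forward chordal driving function $(W_t)_{t\ge0}$ of $\rho$ (viewed from $1$), which is in bijective correspondence with $\rho$; each $\tau_x$ is $(W_t)$-measurable since $\{\tau_x>t\}=\{x\in\H_t(\rho)\}$ is determined by $W|_{[0,t]}$ via the Loewner flow, and $\rho^x$ carries precisely the information of $W|_{[0,\tau_x]}$. For $x_1,x_2\in\Psi$, the event $\{\tau_{x_1}\le\tau_{x_2}\}$ is $W|_{[0,\tau_{x_2}]}$-measurable and on it $\rho^{x_1}$ is a deterministic function of $\rho^{x_2}$ (recover $\tau_{x_1}=\tau_{x_1}(\rho^{x_2})$ and restrict), and symmetrically; iterating, for every finite $F\subset\Psi$ the joint law of $(\rho^x)_{x\in F}$ is determined by $\{\mathrm{Law}(\rho^x):x\in F\}$. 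Finally $\bigcup_{x\in\Psi}[0,\tau_x]$ is cofinal in $[0,\infty)$ --- any $x\in\Psi$ lying in the open set $\H_T(\rho)$ has $\tau_x\ge T$, and such points exist by density --- so $W|_{[0,\infty)}$, hence $\rho$, is a measurable function of $(\rho^x)_{x\in\Psi}$; together with the finite-dimensional statement and a standard consistency argument this determines the law of $\rho$.

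The main obstacle is exactly this last deterministic fact: converting the almost sure nestedness of the family of views into the statement that their marginal laws force the joint law, and hence the law of $\gam$. The one delicate point there is the coincidence event $\{\tau_{x_1}=\tau_{x_2}\}$, on which the two descriptions of $(\rho^{x_1},\rho^{x_2})$ must be checked to agree. Everything else --- the diagonal extraction, the invocation of Theorem~\ref{thm:unif_conv_random}, and the metrizability/subsequence wrap-up --- is routine.
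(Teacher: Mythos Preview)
Your argument is correct and follows the same route the paper takes: reduce to Theorem~\ref{thm:unif_conv_random} via Lemma~\ref{lem:time_sep}, using a diagonal subsequence to upgrade the assumed subsequential $\dcap{L}{x}$-compactness to actual convergence, and then identify every subsequential $\dstrong$-limit as $\sle_\ka$ from its forward views. The paper's proof is a one-liner that leaves both the subsequence extraction and the identification implicit (the latter having been carried out inside the proof of Theorem~\ref{thm:unif_conv_random}, where it is observed that the marginal laws of the $\hat\eta^x$ determine the law of $\gam$); you have simply written these steps out, including a clean proof of the ``deterministic fact'' that the marginal laws of the stopped segments $\rho[0,\tau_x]$ determine the law of $\rho$.
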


Finally, Proposition \ref{ppn:unif_conv_simple} gives the following
simplified criterion for convergence to $\sle_\ka$ when $\ka\le4$
(i.e., when the curve is a.s. in $\simplepaths$):
\begin{cor} \label{cor:app_sle_simple}
Let $(\gam^j)$ be a sequence of simple random curves in~$\pathspace$.
Let $\ka\le4$, and suppose that the $\gam^j,\gam^{j-}$ converge in law
(with respect to the~$\dcapf$ and $\dcapb$ metrics, resp.) to
$\sle_\ka$. Then the $\gam^j$ converge in law to $\sle_\ka$ with
respect to~$\dstrong$.
\end{cor}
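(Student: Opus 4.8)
The plan is to obtain this as the simple-curve analogue of Theorem~\ref{thm:unif_conv_random}: I will combine the Skorohod--Dudley representation with the \emph{second} assertion of Theorem~\ref{thm:unif_conv}, in which $\dcap{R}{x}$- and $\dcap{L}{x}$-convergence at all $x$ is replaced by $\dcapf$- and $\dcapb$-convergence. The only additional input is the classical fact --- already recorded in the statement above --- that for $\ka\le4$ an $\sle_\ka$ is almost surely a simple curve, hence (after conformal mapping) lies in $\simplepaths$; this is precisely what puts us in the regime where the second assertion of Theorem~\ref{thm:unif_conv} is available.

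First I would set up the ambient Polish space and extract a good subsequence. A curve in $\simplepaths$ is determined by its chordal driving function with respect to $1$, a continuous function on $[0,\infty)$ vanishing at $0$, and its time reversal is determined by the chordal driving function with respect to $-1$. The hypothesis is therefore that the forward driving functions of the $\gam^j$ converge in law, uniformly on compacts, to $\sqrt{\ka}$ times a standard Brownian motion, and that the backward driving functions of the $\gam^{j-}$ do likewise. Each of these two sequences of laws converges, hence is tight in $C([0,\infty))$, so the sequence of \emph{joint} laws of the pair (forward driving function of $\gam^j$, backward driving function of $\gam^{j-}$) is tight in $C([0,\infty))^2$; by Prokhorov's theorem every subsequence has a further subsequence $(j_k)$ along which this pair converges in law, to some coupling of the two prescribed $\sle_\ka$ driving laws (the precise coupling is immaterial).

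Next I would run the deterministic result pathwise. Fix such a $(j_k)$ and apply Skorohod--Dudley: on one probability space we may realize the $\gam^{j_k}$ so that, almost surely, their forward driving functions converge uniformly on compacts to a function $W$ with the law of $\sqrt{\ka}$ times Brownian motion, and the backward driving functions of the $\gam^{j_k-}$ converge uniformly on compacts to a function $\tilde W$ with the same law. On the almost sure event in question, $W$ drives an $\sle_\ka$ whose trace $\gam$ is a simple curve (so $\gam\in\simplepaths$), $\tilde W$ drives the corresponding reverse-direction $\sle_\ka$ whose trace $\xi$ lies in $\simplepathsL\subset\tspathsL$, and $\dcapf(\gam^{j_k},\gam)\to0$, $\dcapb(\gam^{j_k-},\xi)\to0$ with each $\gam^{j_k}\in\simplepaths$. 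The hypotheses of the second assertion of Theorem~\ref{thm:unif_conv} thus hold pathwise (the relevant limits being simple), and it yields a curve $\Gamma\in\tspaths$ with $\gam^{j_k}\to\Gamma$ with respect to $\dstrong$; the ``moreover'' part of Theorem~\ref{thm:unif_conv} identifies $\Gamma$ as the trace of the Loewner chain driven by $W$, i.e.\ $\Gamma=\gam$ has the law of an $\sle_\ka$. Since almost sure $\dstrong$-convergence implies convergence in law, $\gam^{j_k}\to\sle_\ka$ in law with respect to $\dstrong$. Finally, as this holds along a further subsequence of every subsequence of $(\gam^j)$, with the same limit $\sle_\ka$ each time, the whole sequence $(\gam^j)$ converges in law to $\sle_\ka$ with respect to $\dstrong$.

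I expect the genuine difficulty to lie in coupling the forward and backward convergences \emph{at once}. Time reversal is not continuous from the $\dcapf$-topology to the $\dcapb$-topology --- indeed Example~\ref{ex:fwd_insufficient} shows that the forward driving data alone, even at every interior point, does not determine the $\dstrong$-limit --- so one cannot simply transport one of the two given convergences into the other. The tightness/Prokhorov/Skorohod device above is what produces a single coupling carrying both convergences, at which point Theorem~\ref{thm:unif_conv} supplies the substantive step and, in particular, automatically reconciles the forward and backward limiting curves.
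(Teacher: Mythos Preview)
Your proposal is correct and follows exactly the route the paper intends: the paper's own proof reads in full ``Follows from Theorem~\ref{thm:unif_conv} by (a simplified version of) the proof of Theorem~\ref{thm:unif_conv_random},'' and what you have written is precisely that simplified version---Prokhorov tightness of the joint forward/backward driving laws, Skorohod--Dudley to couple, then a pathwise application of the second assertion of Theorem~\ref{thm:unif_conv} (available because $\sle_\ka$, $\ka\le4$, is a.s.\ simple), followed by the standard subsequence argument. Your identification of the coupling of forward and backward data as the only nontrivial step is also on point.
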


\subsection{Outline of argument}

In this section we sketch the proof of Theorem~\ref{thm:unif_conv}. For
convenience, in what follows we let all curves started from $-1$
(resp.,~1) be parametrized by $\capacity_1$ (resp., $\capacity_{-1}$).

\textit{Step} 1: \textit{Construction of forward and reverse limiting curves $\eta
,\xi$.} Since no a priori compatibility among the $\eta^x$ or $\xi^x$
was assumed, the first step is to show that if we consider, say, the
forward direction, all the $\eta^x$ are consistent with one another and
with a single limiting curve which is their union in some sense. In
Section \ref{sec:driving_conv}, we will review the notion of Carath\'
eodory convergence, a known consequence of Loewner driving convergence.
Roughly speaking this will tell us that whenever $d_{x}^{R}(\gam
^j,\gam)\to0$, the filling processes of the $\gam^j$ with respect to
$x$ converge to the filling process of $\gam$ with respect to $x$. It
follows from this that for any $x,x'$, $\eta^x$ and $\eta^{x'}$ must
agree at least up to the first time $t$ that one of them is cut off
from the terminal point. Thus there is a unique half-open curve $\eta
\dvtx[0,1)\to\overline{\HH}$ with $\eta\sim_x\eta^x$ for all $x$, and
furthermore one can show that initial segments of the $\gam^j$ converge
in the Hausdorff sense to initial segments of $\eta$ (see Section \ref
{ssec:hausdorff}). Symmetrically we construct $\xi\dvtx[0,1)\to\overline
{\HH}$
from the $\xi^x$.

For simplicity we now restrict to the case where $\eta$ and $\xi$ can
be extended by continuity to closed curves which are simple and
boundary-avoiding.

\textit{Step} 2: \textit{Compatibility}: \textit{$\xi$ is the time reversal of $\eta$.} Let
$z_1=\eta(t_1)$ and $z_2=\eta(t_2)$ for $t_1<t_2$; we must show that
$\xi$ visits $z_2$ before $z_1$. The key is that the driving function
not only gives information about the shape of the filling, but also
about the location of the ``tip'' $\gam(t)$: for $\gam$ a continuously
driven curve and $z\in\HH$, we can use the driving function up to time
$t$ to deduce the probability that a Brownian motion started at $z$ and
stopping upon hitting $\gam\cup\R$ will be stopped to the left or right
of $\gam(t)$. We will show (Section \ref{ssec:hitting_brownian}) that
driving function convergence implies convergence of these Brownian
hitting probabilities.

Let $t_1<t_*<t_2$, and write $\eta_*=\eta[0,t_*]$, $\eta_*^j=\gam
^j[0,t_*]$, $\bar\eta_*=\eta[t_*,\infty)$ and $\bar\eta_*^j=\eta
^j[t_*,\infty)$. Let $\ep>0$ be such that $\eta$ does not re-enter
$\overline{B_{\ep}(z_1)}$ after time~$t_*$. Then there exists
$0<\ep
'<\ep$ sufficiently small so that for any \mbox{$z\in B_{\ep
'}(z_1)\setminus
\eta$}, a~Brownian motion started at $z$ and stopped upon hitting $\R
\cup
\eta$ has probability less than $\de$ (for $\de>0$ small) of being
stopped on $\bar\eta_*$. It then follows from the results of
Section \ref{ssec:hitting_brownian} that for large enough $j$ a
Brownian motion started at $z$ and stopped upon hitting $\R\cup\gam^j$
has probability less than $2\de$ of being stopped on $\bar\eta_*^j$.

On the other hand, the $\bar\eta_*^j$ are initial segments of the
$\gam
^{j-}$, and so must converge in the Hausdorff sense (at least along a
subsequence) to an initial segment of $\xi$ containing $z_2$. Thus if
$\xi$ visits $z_1$ before $z_2$, the $\bar\eta_*^j$ must get
arbitrarily close to $z_1$. This contradicts the observation above that
a Brownian motion started anywhere in $B_{\ep}(z_1)$ and stopped
upon hitting $\R\cup\gam^j$ has a very low probability of being stopped
on $\bar\eta_*^j$. It follows that $\eta=\xi^-\defeqr\gam$.

\textit{Step} 3: \textit{Uniform convergence.} It remains to show that the $\gam^j$
converge uniformly to $\gam$. With $z_1,z_2$ as above, let $\gam
^j[z_1,z_2]$ denote the portion of $\gam^j$ between its nearest
approach to $z_1$ and its nearest approach to $z_2$, with ties broken,
for example, by choosing the earlier time. It follows from the above
that for large enough $j$, the nearest approach to $z_1$ occurs before
the nearest approach to $z_2$. Thus, if we break up the curve $\gam$
into (finitely many) segments $\gam[t_i,t_{i+1}]$ of small diameter,
for large enough $j$ we obtain a corresponding partition of $\gam^j$
into segments $\gam^j[\gam(t_i),\gam(t_{i+1})]$. It then suffices to
show that any subsequential $\cdisthaus$ limit $B$ of $\gam^j[z_1,z_2]$
is contained in $\gam[t_1,t_2]$. By symmetry it suffices to show that
$B$ contains no point of $\gam[t_2,\infty)$, and this follows from the
arguments of Step 2, proving the result.

\section{Counterexamples}
\label{sec:counterex}

In the examples of this section, we consider families of curves
traveling in a domain $D$ between distinct boundary points~$a,b$, where
$(D;a,b)$ is not necessarily $(\HH;-1,1)$. Clearly, all definitions (of
spaces, metrics, etc.) in Section \ref{sec:intro} can be made
analogously for these families via a~conformal mapping $D\to\HH$ taking
$a\mapsto-1$ and $b\mapsto1$. We continue to use the notation
introduced above to refer to these newly defined objects.
\begin{ex}
\label{ex:fwd_insufficient}
We consider curves traveling between $0$ and $\infty$ in $\HH$. For
$n\in\N$, let $z_n = (-1)^n+in$, and let $w_n=in/2$. We will let
$\gam$
denote the curve which is a linear interpolation of the points
\[
0, z_1, w_1, z_2, w_2, \ldots.
\]
See Figure \ref{fig:fwd_insufficient}. Since $z_n\to\infty$ and
$w_n\to
\infty$, we see that $\gam$ is indeed a continuous simple curve from
%
%
\begin{figure}

\includegraphics{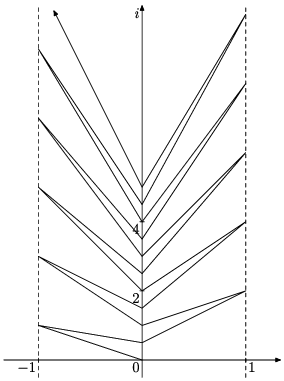}

\caption{Example \protect\ref{ex:fwd_insufficient}: beginning of curve
$\gam$.}
\label{fig:fwd_insufficient}
\end{figure}
$0$ to $\infty$. We then let $\gam^j=2^{-j}\gam$: it is easy to see
that as $\ep\to0$, the rescaled curves $\gam^j$ converge, both
with\vadjust{\goodbreak}
respect to $\cdisthaus$ and with respect to $d_{x}^{R}$ for any
$x\in
\HH$ off the imaginary axis, to the simple path that traces the
imaginary axis. However, it is clear that the $\gam^j$ have no
$\dstrong
$ limit.
\end{ex}

\begin{ex}
\label{ex:bidir.insuff}
We consider curves traveling chordally in $\overline{\D}$ between
$-1$ and~$1$. Let $\eta_i\dvtx[0,1]\to\C$ ($1\le i\le3$) be defined by
\begin{eqnarray*}
\eta_1(t)&\defeq&-1+(1+i)t, \\
\eta_2(t)&\defeq& i-2it, \\
\eta_3(t)&\defeq&-i+(1+i)t
\end{eqnarray*}
and let $\gam=\eta_1\eta_2\eta_3$ [Figure
\ref{fig:bidir.bd.intersect}(c)]; note that $\gam\in\tspaths$. We can easily
find a sequence $(\gam_1^j)$ in $\simplepaths$ converging uniformly to
$\gam$ [Figure \ref{fig:bidir.bd.intersect}(a)]. We can likewise find a
sequence $(\gam_2^j)$ in $\simplepaths$ converging uniformly to
$\tilde
\gam\defeq\eta_1\eta_2\eta_2^-\eta_2\eta_3$ [Figure \ref
{fig:bidir.bd.intersect}(b)]. But both the $\gam_1^j$ and the $\gam_2^j$
converge with respect to $\dcapf$ to~$\gam$, and with respect to $d^L$
to $\gam^-$. Letting $(\gam^j)$ be the sequence obtained by
interweaving $(\gam_1^j)$ and $(\gam_2^j)$, we have
\[
\lim_{j\to\infty}\dcapf(\gam^j,\gam)
=\lim_{j\to\infty}\dcapb(\gam^{j-},\gam^-)
= 0,
\]
but clearly $(\gam^j)$ is not $\dstrong$-Cauchy. Figure \ref
{fig:bidir.self.intersect} illustrates essentially the same example
%
%
\begin{figure}

\includegraphics{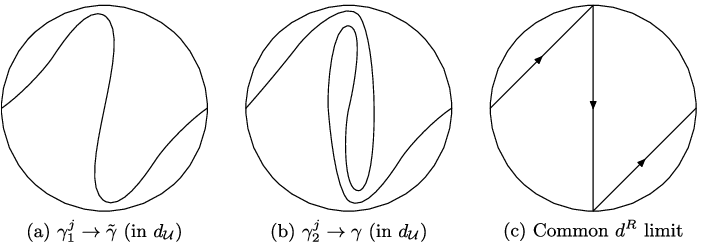}

\caption{Example \protect\ref{ex:bidir.insuff}: $\dcapf$ limit with
boundary intersections.}
\label{fig:bidir.bd.intersect}
\end{figure}
%
%
\begin{figure}[b]

\includegraphics{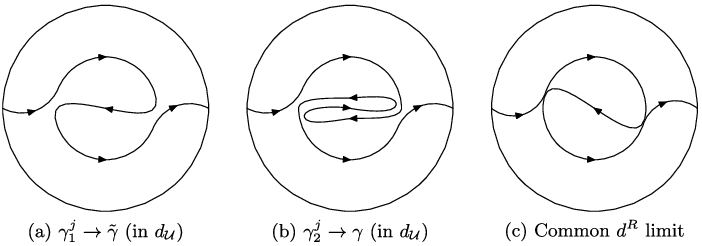}

\caption{Example \protect\ref{ex:bidir.insuff}: $\dcapf$ limit with
self-intersections.}
\label{fig:bidir.self.intersect}
\end{figure}
when the $\dcapf$ and $\dcapb$ limiting curves are allowed to have
self-intersections but not boundary intersections.
\end{ex}
\begin{ex}
\label{ex:spikes}
A useful construction for us is the curve $P$ which is formed by taking
the straight path from $0$ to $1$ and adding increasingly small,
mutually disjoint loops at the dyadic points; these loops are traveled
in the clockwise direction. To be more precise, begin with the straight
path~$P_0$ from~$0$ to $1$. For each $k\in\N$, let $t_{k,j}=2^{-k+1}j +
2^{-k}$ for $0\le j\le2^{k-1}$. Given~$P_{k-1}$, for each $j$ define a
small clockwise loop $\ell_{k,j}$ which begins and ends at~$t_{k,j}$
and otherwise is contained in $\HH$; the size of the $\ell_{k,j}$
should tend to zero in~$k$. Set $P_k$ to be $P_{k-1}$ with the $\ell
_{k,j}$ added, so that the time~$P_{k-1}$ spends on each
$(t_{k,j}-2^{-k},t_{k,j}+2^{-k})$ is divided in thirds between
$(t_{k,j}-2^{-k},t_{k,j})$, the loop $\ell_{k,j}$, and
$(t_{k,j},t_{k,j}+2^{-k})$. Figure \ref{fig:dyadic} shows the first few
iterations of this construction. We will refer to the limiting curve
$P$ as the ``dyadic loops curve based on $[0,1]$''; it is clear that we
can construct a dyadic loops curve based on any simple curve. If a
curve first traces $[0,1]$ and then traces backwards the path of diadic
loops, then all of the points on $[0,1]$ will be double points, but
there will be a dense collection of times mapping to nondouble points.

%
%
\begin{figure}

\includegraphics{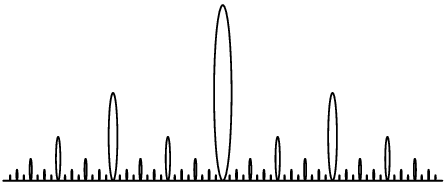}

\caption{Construction of dyadic loops curve.}
\label{fig:dyadic}
\end{figure}

Consider the simple curve shown in Figure \ref{fig:spikes}: first it
travels the left part of the curve from $-1$ to $-1+3i$, with loops to
the left and $U$-shaped ``hooks'' to the right; each ``hook'' is a path
%
%
\begin{figure}[b]

\includegraphics{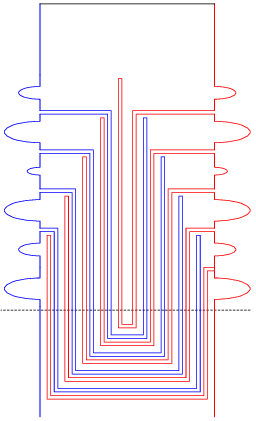}

\caption{Example \protect\ref{ex:spikes}.}
\label{fig:spikes}
\end{figure}
that passes below the dotted line, then returns upward to approximately
the place it started, then approximately retraces itself. The curve
then goes right to $1+3i$ and travels the right part of the curve from
$1+3i$ to~$1$, with loops to the right and hooks to the left; again,
all hooks are bent to pass below the dotted line. The hooks on the two
sides are interlocking: thus, for the curve traveled in either
direction, each successive hook is mostly ``harmonically enclosed''
within previous hooks.
\end{ex}

We define a sequence of curves $\gam^j\in\simplepaths$ which are
versions of this curve, so the hooks and loops become more numerous,
and the distance between the two vertical sides decreases to zero, as
$j\to\infty$. One then sees that for all~$x$ in a countable dense set,
$\gam^j$ converges with respect to both $d_{x}^{R}$ and $d_{x}^{L}$
to the dyadic loops curve $\gam$ which travels clockwise around the
boundary of the line segment between $0$ and $3i$. But it is clear that
the $\gam^j$ do not converge uniformly to $\gam$.

Before giving Example \ref{ex:bidir.dense.insuff_ctsdriven}, we present
here a simplified version:
\begin{ex}
\label{ex:bidir.dense.insuff}
Let $(\gam_1^j)$ be a sequence of simple curves such as the one shown
in Figure~\ref{fig:bidir.dense.insuff}(a), converging uniformly to
the curve $\gam$ shown in Figure~\ref{fig:bidir.dense.insuff}(b).
Write $\gam\defeq\eta_1\eta_2\eta_3\eta_4\eta_5\eta_6$ where the
numbering is as in the figure. Now define $\tilde\gam\defeq\eta
_1\eta
_2\eta_4\eta_3\eta_5\eta_6$, and let $(\gam_2^j)$ be a sequence of
simple curves converging uniformly to $\tilde\gam$. Let $(\gam^j)$ be
the sequence obtained by interweaving the $\gam_1^j$ and $\gam_2^j$. It
can be checked that the $\gam^j$ are Cauchy with respect to
$d_{x}^{R}$ and $d_{x}^{L}$ for every $x\in\Psi$, but they clearly are
%
%
\begin{figure}

\includegraphics{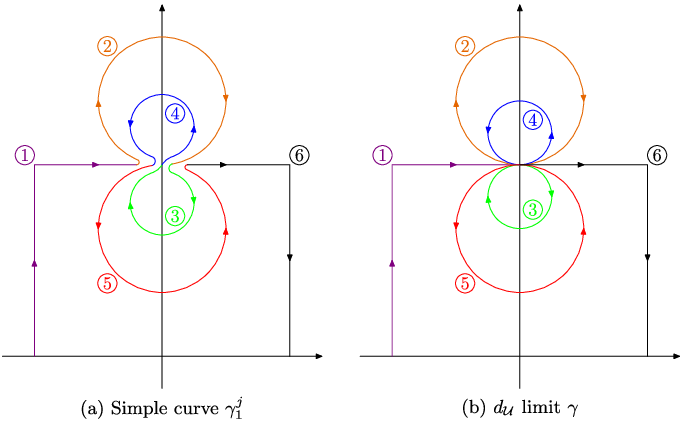}

\caption{Example \protect\ref{ex:bidir.dense.insuff}: incompatible
forward and reverse limits.}
\label{fig:bidir.dense.insuff}
\vspace*{-3pt}
\end{figure}
not $\dstrong$-Cauchy. In this example the $\gam^j$ do not converge
with respect to every $x\in\Psi$ to continuously driven curves (i.e.,
to limits in $\pathspaceR$ or $\pathspaceL$). For example, if $x$ lies
inside the uppermost inner loop $\eta_4$, then the $\gam^j$ converge in
driving function to the curve $\eta_1\eta_2\eta_4$, which does not lie
in $\pathspaceR$. (Nevertheless, this curve can be generated by a
continuous driving function \textit{with respect to $x$}.)\vspace*{-3pt}
\end{ex}
\begin{ex} \label{ex:bidir.dense.insuff_ctsdriven}
We now present a modification of Example \ref{ex:bidir.dense.insuff} in
which all $d_{x}^{R}$ and $d_{x}^{L}$ limits lie in $\pathspaceR$
and $\pathspaceL$, respectively.

%
%
\begin{figure}

\includegraphics{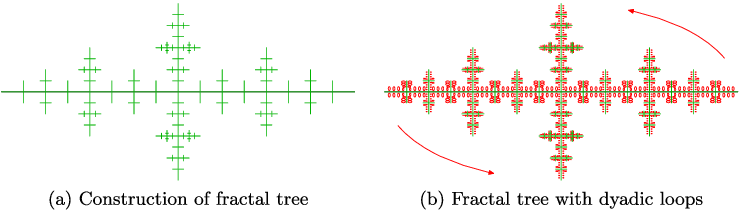}

\caption{Example \protect\ref{ex:bidir.dense.insuff}: incompatible
forward and reverse limits.}
\label{fig:fractal.tree}
\vspace*{-3pt}
\end{figure}

The main construction we will use is the ``fractal tree,'' the
beginning iterations of which are shown in Figure
\ref{fig:fractal.tree}(a). We leave it to the reader to verify that this
tree can be constructed so that the curve which traces its boundary
clockwise (i.e., traces the conformal boundary of the complement of the
tree) has a dense set of times mapping to double points and avoids a
countable dense subset of $\HH$. Moreover, if we then traverse the
boundary counterclockwise and add small loops beginning and ending at
the same prime end of the conformal boundary [Figure~\ref
{fig:fractal.tree}(b)], in the limit we will obtain a curve which is
continuously driven in the forward but not the reverse direction.
We\vadjust{\goodbreak}
will refer to the counterclockwise portion of the curve as the ``dyadic
loops curve based on the fractal tree.'' From now on, we will use the
diagram in Figure \ref{fig:fractal.tree}(b) to indicate this
limiting curve.

%
%
\begin{figure}[b]
\vspace*{-3pt}
\includegraphics{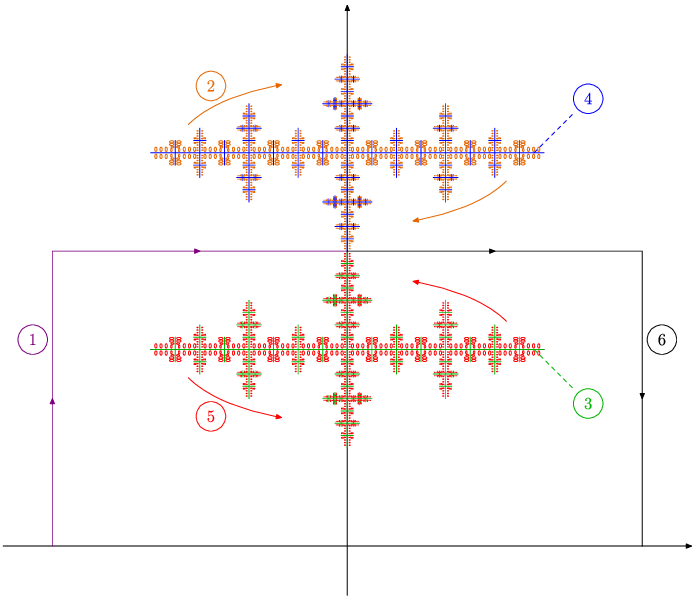}

\caption{Example \protect\ref{ex:bidir.dense.insuff_ctsdriven}:
$\dstrong$ limit $\gam$.}
\label{fig:bidir.dense.insuff_ctsdriven_2}
\end{figure}

Consider now the curve $\gam$ which is shown in Figure \ref
{fig:bidir.dense.insuff_ctsdriven_2}. It is the concatenation of $\eta
_i$ ($1\le i\le6$), where:
\begin{enumerate}
\item$\eta_1$ is the linear interpolation of the points $-1$,
$-1+3i$, $3i$;
\item$\eta_2$ is the clockwise dyadic loops curve based on the upper
fractal tree;\vadjust{\goodbreak}
\item$\eta_3$ travels the lower fractal tree clockwise beginning and
ending at $3i$;
\item$\eta_4$ travels the upper fractal tree counterclockwise
beginning and ending at $3i$;
\item$\eta_5$ is the counterclockwise dyadic loops curve based on the
lower fractal tree;
\item$\eta_6$ is the interpolation of the points $3i$, $1+3i$, $1$.
\end{enumerate}
Let $\Psi$ be a countable dense subset of $\HH$ avoiding $\gam$; we
leave it to the reader to verify that one exists. We then define
$\tilde
\gam\defeq\eta_1\eta_2\eta_4\eta_3\eta_5\eta_6$.

We can find simple curves $\gam_1^j,\gam_2^j$ converging uniformly to
$\gam,\tilde\gam$, respectively; by interweaving the sequences we obtain
a sequence $(\gam^j)$ which fails to converge uniformly. But we can
check that for all $x\in\Psi$, we have $d_{x}^{R}(\gam^j,\gam
_1)\to0$
where $\gam_1\defeq\eta_1\eta_2\eta_3\eta_5\eta_6$, and
$d_{x}^{L}(\gam^{j-},\gam_2^-)\to0$ where $\gam_2\defeq\eta_1\eta
_2\eta
_4\eta_5\eta_6$. We have $\gam_1\in\pathspaceR$ and $\gam_2\in
\pathspaceL$, but $\gam_2\ne\gam_1^-$ so the forward and reverse limits
are incompatible.
\end{ex}

\section{Driving function convergence}
\label{sec:driving_conv}

In this section we present (along with a~few related facts) a known
implication of $d_{x}^{R}$ convergence, namely Carath\'eo\-dory convergence.
\begin{rmk} All of the results in this section continue to hold if
$\Psi
$ is replaced with some $\Psi'$ which is the union of $\Psi$ with a
countable dense subset of $\overline\R\setminus\{-1,1\}$, and the
path spaces $\pathspaceR$, $\pathspaceL$, etc. are redefined
accordingly. When $x \in\R$, the metrics $d_{x}^{L}$ and $d_{x}^{R}$
correspond to chordal rather than radial Loewner driving functions.
\end{rmk}

We let $\initpathsR$ denote the space of all curves which can arise as
closed initial segments of curves in $\pathspaceR$; we define
$\initpathsL$ similarly. All the definitions of Section \ref{sec:intro}
(filling processes, distance functions, etc.) can be made for these
spaces in exactly the same way. In particular, if $\gam^j,\gam\in
\pathspaceR$ with $d_{x}^{R}(\gam^j,\gam)\to0$, then, under the
$\capacity_x$ parametrization, we have $d_{x}^{R}(\gam^j[0,t],\gam
[0,t])\to0$ as well for any $t$. The distance $d_{x}^{R}$ is a metric
on $\initpathsR/\sim_x$.
\begin{ppn} \label{ppn:separable}
For each $x \in\Psi$, the spaces $\pathspaceR/\sim_x$ and
$\pathspaceL/\sim_x$ are separable with respect to the topology
generated by $d_{x}^{R}$ and $d_{x}^{L}$, respectively; likewise
$\pathspaceR/\sim_1$ and $\pathspaceL/\sim_{-1}$ are separable with
respect to the topology generated by $\dcapf$ and $\dcapb$,
respectively. Also, $\pathspace$ is separable with respect to the
topology generated by $\dstrong$.
\end{ppn}
\begin{pf}
For the separability of $\pathspaceR/\sim_x$ it suffices to prove
separability of a larger metric space: for metric spaces separability
is equivalent to second-countability, and second-countability is
inherited in the subspace topology (see, e.g., \cite{munkres}).

It is easy to see that the space of all pairs $(W,T)$ where $T>0$ and
$W\dvtx\allowbreak[0,T]\to\R$ is continuous, is separable under the metric (\ref
{eq:driving_metric}): a countable dense set can be constructed by taking
$W$ continuous and linear (with rational derivative) on each of a
finite set of rational-length intervals which partition $[0,T]$, for
$T$ rational. Separability immediately follows for $\pathspaceR/\sim_x$
and $\pathspaceL/\sim_x$, and it follows for $\pathspaceR/\sim_1$, and
$\pathspaceR/\sim_{-1}$ by a similar argument. For the metric
$\dstrong
$ a countable dense subset of $\pathspace$ can similarly be given using
functions which are piecewise linear as maps into~$\HH$.
\end{pf}

\subsection{\texorpdfstring{Carath\'eodory convergence}{Caratheodory convergence}}
\label{ssec:caratheodory}

We begin by recording some preliminary consequences of $d_{x}^{R}$
convergence for a fixed $x\in\Psi$. Extending our previous notation,
if $x\in\Psi$ and $\gam$ in $\initpathsR$ or $\initpathsL$ is
parametrized according to $\capacity_x$, then $\HH_{x,t}=\HH
_{x,t}(\gam
)$ denotes the unique component of $\HH\setminus\gam[0,t]$ containing~$x$,
and $K_{x,t}=K_{x,t}(\gam)=\HH\setminus\HH_{x,t}$ denotes the
filling with respect to~$x$ at time~$t$. If $T=\capacity_x\gam$ then we
will generally drop the time subscript and simply write $\HH_x=\HH
_x(\gam),K_x=K_x(\gam)$. We let $\overline{\HH}_{x,t}$ denote the closure
of $\HH_{x,t}$ in $\overline{\HH}$ under the $\cdist$ metric.
\begin{defn}
Let $g^j = (g_t^j)_{0\le t\le T}, g = (g_t)_{0\le t\le T}$ be (radial
or chordal) Loewner chains with respect to $x$, defined on $\HH$. We
say that \textit{$g^j$ converges to $g$ in the Carath\'eodory sense with
respect to $x$}, denoted $g^j \caraconv_x g$, if for all $\ep>0$ and
$t\le T$, $g^j \to g$ uniformly on
\[
[0,t]\times\overline{\HH}{}^{(\ep)}_{x,t}\qquad
\mbox{where }
\overline{\HH}{}^{(\ep)}_{x,t} \defeq
\{z\in\overline{\HH}\dvtx \cdist(z,\overline{K}_{x,t}) \ge\ep\}.
\]
In particular, this implies that $g_t^j \to g_t$ pointwise on $\HH
_{x,t}$ for each $t$. Also, $\imag g_t^j(z)$ has a nonzero limit as
$j\to\infty$ if and only if $z \in\HH_{x,t}$.
\end{defn}

The following proposition relates driving function convergence to
Cara\-th\'eodory convergence. The result for chordal Loewner chains is
proved in~\cite{lawler}; the proof for radial Loewner chains is
entirely similar and we omit it here.
\begin{ppn} \label{ppn:cara_conv_loewner}
Let $g^j=(g_t^j)_{0\le t\le T}, g=(g_t)_{0\le t\le T}$ be Loewner
chains corresponding to the driving functions $W_t^j, W_t$, all with
respect to $x$. If \mbox{$W_t^j \to W_t$} uniformly on $[0,T]$, then $g^j
\caraconv_x g$.
\end{ppn}
\begin{cor} \label{cor:cara_from_driving}
Let~$\gam^j,\gam\in\initpathsR$ with $T^j=\capacity_x\gam^j$,
$T=\capacity_x\gam$ and let~$g^j,g$ denote the Loewner chains with
respect to $x$ corresponding to $\gam^j,\gam$, respectively. Suppose
$d_{x}^{R}(\gam^j,\gam)\to0$. Then $(g_s^j)_{0\le s\le
t}\caraconv_x(g_s)_{0\le s\le t}$ for all $t<T$. We also have
$g_{T_j}^j\to g_T$ uniformly on
$\overline{\HH}{}^{(\ep)}_x\defeq\overline{\HH}{}^{(\ep)}_{x,T}$.
\end{cor}
\begin{pf} The first statement follows directly from Proposition
\ref{ppn:cara_conv_loewner}. For the second statement, fix $\de>0$,
and
note that if we replace $g^j,g$ by the Loewner\vadjust{\goodbreak} chains $h^j,h$
corresponding to the driving functions $W_{x,t\wedge T^j}, W_{x,t\wedge
T}$, respectively, then $h_t^j,h_t$ are defined for all $t\ge0$ (with
$h_t=g_t$ for $t\le T$ and $h_t^j=g_t^j$ for $t\le T^j$). By
Proposition \ref{ppn:cara_conv_loewner}, for all $\ep>0$ and for all $t
< \infty$, we will have $\|h^j-h\|_\infty<\de$ on $[0,t]\times
\overline{\HH}{}^{(\ep)}_{x,t}$ for $j$ sufficiently large. Also, by uniform
continuity of $h$ on $[0,t]\times\overline{\HH}{}^{(\ep)}_{x,t}$,
we will
have $|h_{T^j}(z)-h_T(z)|<\de$ for all $z\in\overline{\HH}{}^{(\ep
)}_{x,T\vee
T^j}$ for $j$ sufficiently large. Therefore
\[
|g_{T^j}^j(z)-g_T(z)|
= |h_{T^j}^j(z)-h_T(z)|
\le|h_{T^j}^j(z) - h_{T^j}(z)| + |h_{T^j}(z) - h_T(z)|
< 2\de
\]
for all $z\in\overline{\HH}{}^{(\ep)}_{x,T\vee T^j}$ and $j$ sufficiently
large. The result\vspace*{-1pt} follows by noting that for any $\ep>0$, we can find
$\ep'>0$ small enough so that $\overline{\HH}{}^{(\ep
)}_{x,T}\subseteq
\overline{\HH}{}^{(\ep')}_{x,s}$ for $s$ sufficiently close to
$T$.\footnote
{This can be checked directly, for example, by similar methods as are
used to prove Proposition~\ref{ppn:cara_conv_loewner}.}
\end{pf}
\begin{rmk} \label{rmk:shifted_curves}
Given a Loewner chain $g\!=\!(g_t)_{0\le t\le T}$ corresponding to $\gam
\!\in\!\allowbreak
\initpathsR$, for any $s<T$ we can also consider the maps $g^{(s)} =
(g_t^{(s)})_{0\le t\le T-s}$ which satisfy $g_{s+t} = g_t^{(s)}\circ
g_s$. These correspond to the curve \mbox{$\gam^{(s)}(t)\defeq g_s(\gam
(s+t))$} defined for $0\le t\le T-s$, and it is easily seen that
$g^{(s)}$ is simply a Loewner chain with driving function
$W_t^{(s)}\defeq W_{s+t}$. Thus, if we have $d_{x}^{R}(\gam^j,\gam
)\to
0$ as in the corollary above, then also $d_{x}^{R}(\gam^{j,(s)},
\gam
^{(s)})\to0$ for any $s<T$.\footnote{This is a slight abuse of notation
since the curves $\gam^{j,(s)}$ and $\gam^{(s)}$ do not necessarily
start at the same point; however $\gam^{j,(s)}(0)$ clearly converges to
$\gam^{(s)}(0)$.}
\end{rmk}

The following corollary will be useful for determining the uniform
limit of a sequence of curves from their Carath\'eodory convergence.
\begin{cor} \label{cor:filling_hausdorff_conv}
Let $x\in\Psi$, and let $\gam^j,\gam\in\initpathsR$ with
$d_{x}^{R}(\gam^j,\gam)\to0$. Then:
\begin{longlist}[(a)]
\item[(a)] For any $\ep>0$, $\overline{\HH}{}^{(\ep)}_x(\gam)$ is a subset
of $\HH
_x(\gam^j)$ for sufficiently large $j$.

\item[(b)] If $U$ is a connected open subset of $\HH$ with $x\in\overline
{U}$, and $U\subseteq\HH_x(\gam^j)$ for large~$j$, then $U\subseteq
\HH
_x(\gam)$.
\item[(c)] If $K_\dagger$ is any $\cdisthaus$ subsequential limit of the
$K_x(\gam^j)$, and $\HH_\dagger$ is the unique component of $\HH
\setminus K_\dagger$ whose closure contains $x$, then $\HH_\dagger
=\HH
_x(\gam)$.
\end{longlist}
\end{cor}
\begin{pf}
Let $(g_{x,t})$ and $(g_{x,t}^j)$ denote the Loewner chains
corresponding to $\gam^j$ and $\gam$, respectively. Throughout this
proof we will use the notation $g=g_{x,T}$ and $g^j=g_{x,T}^j$, where
$T=\capacity_x\gam$ and $T^j=\capacity_x\gam^j$.

(a) By Corollary \ref{cor:cara_from_driving}, $g^j$ must be defined on
$\overline{\HH}{}^{(\ep)}_x(\gam)$ for sufficiently large~$j$.

(b) Suppose for sake of contradiction that $U\cap K_x(\gam)\ne
\varnothing$. By the conditions on~$U$, for any $k>0$ we can
find\vadjust{\goodbreak}
$z\in
\HH_x(\gam)$ such that for some \mbox{$\de>0$}, $\cdist(z,K_x(\gam))<\de
$ and
$B_{k\de}(z)\subseteq U$. Then $z\in\HH_x(\gam^j)$ for large $j$,
and by Carath\'eodory convergence, the conformal radius of $\HH_x(\gam
^j)$ with respect to~$z$ converges to the conformal radius of $\HH
_x(\gam)$ with respect to $z$. But by the Koebe distortion theorem, the
inradius of a domain with respect to an interior point is within a
constant factor of its conformal radius: by choosing~$k$ large enough
we can guarantee that $K_x(\gam^j)$ will intersect $B_{k\de}(z)$ for
large~$j$, which gives the desired contradiction.

(c) By (a) it is clear that $\HH_\dagger\supseteq\HH_x(\gam)$.
Conversely, if $U$ is a connected open subset of $\HH_\dagger$ with
$x\in\overline{U}$ and $\overline{U}\subset\HH_\dagger$, then
$\overline{U}\subset\HH\setminus K_x(\gam^j) = \HH_x(\gam^j)$ for
large~$j$,
and so
by (b) we have $U\subseteq\HH_x(\gam)$. Since $\HH_\dagger$ is a union
of such $\overline{U}$ we find $\HH_\dagger\subseteq\HH_x(\gam)$, hence
they are equal.
\end{pf}

\subsection{Hitting probabilities of Brownian motion}
\label{ssec:hitting_brownian}

Informally, the above co\-rollary says that $d_{x}^{R}$ convergence
gives convergence of the ``shape'' of the fillings~$K_x(\gam^j)$. To
identify the location of the ``tip'' $\gam(T)$ on $K_x(\gam)$, we next
consider hitting probabilities of Brownian motion (i.e., harmonic
measure) for segments of the (conformal) boundary of $\HH_x(\gam)$.

For $\gam\in\initpathsR$ with $T=\capacity_x\gam$, if $x$ is not
swallowed by $\gam$ then we define the \textit{left boundary} (with
respect to $x$) of $\gam$ to be the maximal (closed) clockwise segment
of the conformal boundary of $\HH_x(\gam)$ which begins at $\gam(T)$
and whose intersection with $\R$ has empty interior; we define the
right boundary symmetrically. If $x$ is swallowed by $\gam$ at some
time $\tau_x\le T$, the left boundary is defined to be the set of
points (more precisely, prime ends) on the conformal boundary of $\HH
_x(\gam)$ which lie on the left boundary of $\gam[0,t]$ for any
$t<\tau
_x$. We let $\al_x^z(\gam)$ denote the probability that a Brownian
motion, started at $z\in\HH_x(\gam)$ and stopped upon hitting $\gam
\cup
\R$, will hit the left boundary of~$\gam$.

\begin{ppn} \label{ppn:hit_left_bd}
Fix $x\in\Psi$, and let $\gam^j,\gam\in\initpathsR$ with
$d_{x}^{R}(\gam^j,\gam)\to0$. Then $\al_x^z(\gam^j)\to\al
_x^z(\gam)$.
\end{ppn}

We begin by proving an easier result. First suppose $x$ is not
swallowed by $\gam$. Fix a ``reference point'' $P\in\R$ with $P<\inf
(\gam\cap\R)$, and consider the event that the Brownian motion started
at $z\in\HH_x(\gam)$ will hit either the left boundary of $\gam$ or the
segment $[P,\inf(\gam\cap\R)]$. If this occurs we say that the Brownian
motion hits to the left of the tip with respect to $P$, and we denote
the probability of this event by $\al_x^z(\gam,P)$.
\begin{lem} \label{lem:hit_left_ref_pt}
Fix $x\in\Psi$, and let $\gam^j,\gam\in\initpathsR$ with
$d_{x}^{R}(\gam^j,\gam)\to0$ such that~$x$ is not swallowed by $\gam$.
Then, for any reference point $P$ as above,
\[
\al_x^z(\gam^j,P)\to\al_x^z(\gam,P).
\]
\end{lem}

\begin{pf}
By the conformal invariance of Brownian motion we consider the
problem
in $\D$, with $x=0$, $z\in\D$ and $P\in\pd\D$. Let $g_t,W_t$
denote the
(radial) Loewner\vadjust{\goodbreak} chain and driving function corresponding to $\gam$,
and similarly~$g^j_t,W^j_t$. We write $g=g_T$ and $g^j=g^j_{T_j}$ for
the terminal Loewner maps. By the conformal invariance of Brownian
motion, $\al_x^z(\gam,P)$ is exactly the probability that a Brownian
motion started at $g(z)$ and stopped upon hitting~$\pd\D$ will land on
the arc going counterclockwise from $g(P)$ to $W_T$. Likewise $\al
_x^z(\gam^j,P)$ is the probability that a Brownian motion started at
$g^j(z)$ and stopped upon hitting $\pd\D$ will land on the arc going
counterclockwise from $g^j(P)$ to $W_{T^j}^j$. But $W^j_{T^j}\to W_T$
by assumption, and $g^j\to g$ pointwise on $\overline{\D}\setminus
D_T(\gam)$ by Carath\'eodory convergence, so the result
follows.\vspace*{-3pt}
\end{pf}

\begin{pf*}{Proof of Proposition \ref{ppn:hit_left_bd}}
First suppose $x$ is not swallowed by $\gam$. Write $z_0=\inf(\gam
\cap\R
)$ and $z^j_0=\inf(\gam^j\cap\R)$. We can choose $P<z_0$ to make the
difference $\al^z_x(\gam,P)-\al^z_x(\gam)$ arbitrarily small. On the
other hand
$\al^z_x(\gam^j,P)-\al^z_x(\gam^j)$
is the probability that a Brownian motion started at $z$ and stopping
upon hitting $\gam^j\cup\R$ will land on the segment $[P,z_0^j]$. By
Corollary \ref{cor:filling_hausdorff_conv} we must have $\liminf
z_0^j\ge P$. If $\limsup z_0^j\le P$ then clearly $\al^z_x(\gam
^j,P)-\al
^z_x(\gam^j)\to0$, so the result follows from Lemma
\ref{lem:hit_left_ref_pt}. Therefore suppose $\limsup z_0^j>P$, so that the
curves $\gam^j$ must come close to $z_0$ without touching. Then the
hitting probability of $[z_0,z_0^j]$ must tend to zero (e.g., using the
Beurling estimate), so Lemma \ref{lem:hit_left_ref_pt} again gives the result.

It remains to check the case of when $x$ is swallowed by $\gam$, that
is, $\tau_x(\gam)\le T$. We again work in the unit disc, with $x=0$ and
$z\in\D$. Suppose $c=\al_0(\gam)$ and $\tilde c=\lim_j\al_0(\gam^j)$
with $|c-\tilde c|>\ep$. We have $\lim_{t\incto\tau_0}\al_0(\gam
[0,t])=c$, so for any $\de>0$ we can choose $t<\tau_0$ such that
$\tau
_0-t<\de$ and $|\al_0(\gam[0,t'])-c|<\de$ for all $t'\ge t$; by the
above result we also have for large $j$ that $\al_0(\gam^j[0,t])$ is
within $\de$ of $c$ but $\al_0(\gam^j[0,\tau_0])$ is within $\de$ of
$\tilde c$. Recalling Remark \ref{rmk:shifted_curves}, we now consider
the systems under the maps $g^j_t$: the curves $\gam^{j,(t)}(s)\defeq
g^j_t(\gam(t+s))$ must travel in such a way that $\al_0(\gam
^{j,(t)}[0,s])$ changes by more than $\ep-2\de$ within (capacity) time
$\tau_0-t<\de$. Taking $\de\to0$ we see that this must contradict the
hypothesis that $\gam$ is the initial segment of a continuously driven
curve.\vspace*{-3pt}~%
\end{pf*}

Given $\gam\in\initpathsR$, for any closed subset $S$ of $\gam\cup
\R$
we will let $p^z_x(S;\gam)$ denote the probability that Brownian motion
started at $z$ and stopped upon hitting $\gam\cup\R$ will be stopped on
$S$ (regardless of whether it stops on the left or right boundary of
$\gam$).\vspace*{-3pt}
\begin{cor} \label{cor:hitting_brownian}
Fix $x\in\Psi$, and let $\gam^j,\gam\in\initpathsR$ with
$d_{x}^{R}(\gam^j,\gam)\to0$. Let $T=\capacity_x\gam$ and
$T^j=\capacity
_x\gam^j$. Then, for any $t<T$,
\[
\lim_{j\to\infty} p_x^z(\gam^j[t,T^j]; \gam^j) = p_x^z(\gam
[t,T];\gam).
\]
It follows that for $s<t<T$, $p_x^z(\gam^j[s,t]; \gam^j) \to
p_x^z(\gam
[s,t];\gam)$.
\end{cor}
\begin{pf}
The first claim follows from Remark \ref{rmk:shifted_curves} by
applying Proposition \ref{ppn:hit_left_bd} to the curves $\gam
^{j,(t)}[0,T^j-t]$ and $\gam^{(t)}[0,T-t]$. The second claim is an
immediate consequence,\vadjust{\goodbreak} since $p^z_x(\gam[s,t];\gam)=p^z_x(\gam
[s,T];\gam
)-p^z_x(\gam[t,T];\gam)$.~%
\end{pf}

For our purposes it will suffice to consider only $p_x^x(\cdot;\cdot)$,
which we will denote from now on by $p_x(\cdot;\cdot)$. [Note, however,
that when $\Psi$ is replaced by~$\Q$ or $\{\pm1\}$ it will still be
useful to consider $p_x^z(\cdot;\cdot)$ for general $z\in\HH_x(\gam)$.]

\section{Convergence of deterministic curves}
\label{sec:unif_conv}

In this section we will prove Theorem \ref{thm:unif_conv}.

\subsection{Hausdorff convergence and compatibility}
\label{ssec:hausdorff}

\begin{lem} \label{lem:haus_conv}
Let $\gam^j,\gam\in\initpathsR$ with $d_{x}^{R}(\gam^j,\gam
)\to0$
for all $x\in\Psi$. Then $\cdisthaus(\gam^j,\gam)\to0$.
\end{lem}
\begin{pf}
For any $\ep>0$ it is possible to choose finitely many points
$x_1,\ldots,x_n\in\Psi$ such that $Q\defeq\bigcup_{i=1}^n
\overline{\HH}{}
^{(\ep)}_x(\gam)$ contains every point $z\in\overline{\HH}$ with
$\cdist
(z,\gam)\ge\ep$. Applying Corollary \ref{cor:filling_hausdorff_conv}(a)
to each component separately shows
that $\gam^j\cap Q=\varnothing$ for sufficiently large $j$, hence
$\gam
^j$ is contained in an $\ep$-neighborhood of $\gam$ for sufficiently
large $j$.

For the other direction, let $y\in\gam$, and let $U=B_{\ep}(y)$.
Then $d_{x}^{R}(\gam^j,\gam)\to0$ for some $x\in U\cap\Psi$,
and so
by Corollary \ref{cor:filling_hausdorff_conv}(b) it must be that $U$
intersects~$\gam^j$ for large $j$. Since $\gam$ is compact, it follows
that it will be contained in an $\ep$-neighborhood of $\gam^j$ for
large $j$, which concludes the proof.
\end{pf}

The next lemma tells us that even if we are not given a single curve to
which the $\gam^j$ converge in all the $d_{x}^{R}$ metrics, we can
\textit{almost} construct it from knowing the $d_{x}^{R}$ limits for
each $x\in\Psi$:
\begin{lem} \label{lem:compatibility}
Let $(\gam^j)$ be a sequence in $\initpathsR$, and suppose that for
each $x\in\Psi$ there exists $\gam^x\in\pathspaceR$ with
$d_{x}^{R}(\gam^j,\gam^x)\to0$. Then there exists a unique half-open path
$\eta\dvtx[0,1)\to\overline{\HH}$ such that each $\hat\gam^x\defeq
\gam
^x[0,\tau
_x(\gam^x)]$ is an initial segment of $\eta$ and $\eta=\bigcup
_x\hat\gam
^x$ (up to the inclusion of endpoints).
\end{lem}
\begin{pf}
Let $x_1,x_2$ be two distinct points in $\Psi$. For $i=1,2$ we let
$\gam
^{x_i}$ be parametrized by $\capacity_1$, and set
\[
\tau^i_j=\inf\{t\ge0\dvtx x_j\notin\HH_t(\gam^{x_i})\}.
\]
Set $\si^i=\tau^i_1\wedge\tau^i_2$; this is the first time $t$ such
that either the $x_i$ lie in different components of $\HH\setminus
\gam
^{x_i}[0,t]$, or that they both no longer lie in $\HH_t(\gam^{x_i})$,
the unique component of $\HH\setminus\gam^{x_i}[0,t]$ whose closure
contains $1$. Then set $\si=\si^1\wedge\si^2$; we claim that the
$\gam
^{x_i}$ must agree up to time $\si$. To see this, let $t<\si$: by
definition of $\si$ we must have $\HH_t(\gam^{x_i}) = \HH
_{x_1,t}(\gam
^{x_i}) = \HH_{x_2,t}(\gam^{x_i})$ for $i=1,2$. Let $U$ be a connected
open subset of $\HH$ with $\overline{U}\subset\HH_{x_1,t}(\gam^{x_1})$
and $x_1,x_2\in U$. Then by Corollary
\ref{cor:filling_hausdorff_conv}(a) we have for large $j$ that $U\subset
\HH
_{x_1,t}(\gam^j)$ and $U\subset\HH_{x_2,t}(\gam^j)$, hence $\HH
_{x_1,t}(\gam^j)=\HH_{x_2,t}(\gam^j)$. By Corollary
\ref{cor:filling_hausdorff_conv}(b) we have $U\subseteq\HH_{x_2,t}(\gam
^{x_2})$, and $\HH_{x_1,t}(\gam^{x_1})$ is a union\vadjust{\goodbreak} of sets of the form
$U$ which proves $\HH_{x_1,t}(\gam^{x_1})\subseteq\HH_{x_2,t}(\gam
^{x_2})$, and so by symmetry they are equal. Since the curves are
uniquely determined by their filling processes, the $\gam^{x_i}$ must
agree up to time $\si=\si^1=\si^2$.

It follows that one of the $\hat\gam^{x_i}$ is an initial segment of
the other: $\si=\tau^1_j=\tau^2_j$ for some $j$, and then the curve
$\hat\gam^{x_j}$ must end at time $\si$. Therefore we can let~$\eta
$ be
the union of all $\hat\gam^x$ for $x \in\Psi$; if there is one
$\hat
\gam^x$ of which all other~$\hat\gam^{x'}$ are initial segments, then
$\eta=\hat\gam^x$ is a curve going from $-1$ to $1$. If not, we view
$\eta$ as a half-open path that does not contain its terminal
endpoint.~%
\end{pf}

If $\eta$ is a half-open curve as constructed in Lemma
\ref{lem:compatibility}, we will write $d_{x}^{R}(\gam^j,\eta)\to0$ if
$d_{x}^{R}(\gam^j,\eta')\to0$ for some (all) $\eta'\in
\pathspaceR$
with $\eta'[0,\tau_x(\eta')]=\hat\eta^x$. The following is an example
showing that this $\eta$ need not extend to a closed continuous curve
which contains its terminal endpoint:
\begin{ex} \label{ex:bidir.half.open}
We consider curves traveling between $0$ and $\infty$ within the
closure of the infinite half-strip $D=\{|{\real z}|<1\}\cap\HH$, with the
countable dense subset $\Psi=(\Q\cap D) \setminus(i\R)$. We will adapt
the curve of Example \ref{ex:fwd_insufficient} as follows: for $n\in
\N
$, let $z_n = (-1)^n + in$ as before, but now let $w_n=i(1-2^{-n})$. We
will let $\eta_k$ denote the closed curve which is a linear
interpolation of the points
\[
0, z_1, w_1, \ldots, z_{k-1}, w_{k-1}, z_k.
\]
See Figure \ref{fig:bidir.half.open}(a) and (b). If we let $\eta$ denote the union over all
%
%
\begin{figure}

\includegraphics{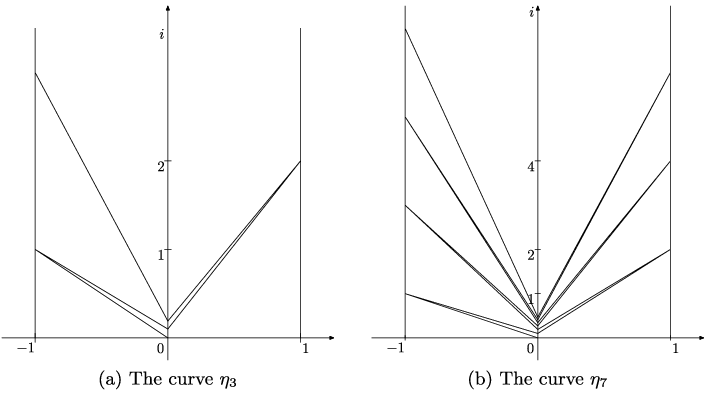}

\caption{Example \protect\ref{ex:bidir.half.open}: $\eta$ does not
extend continuously to closed curve.}
\label{fig:bidir.half.open}
\end{figure}
$\eta_k$, then $\eta$ travels below the line $\{\imag z=1\}$ infinitely
many times, and so it does not extend to a continuous closed curve from
$0$ to $\infty$. Nevertheless it is easy to find a sequence $(\gam^j)$
in $\simplepaths$ such that $d_{x}^{R}(\gam^j,\eta)\to0$ for all
$x\in
\Psi$.
\end{ex}

In the next section we will describe how to use \textit{bidirectional}
driving convergence to obtain the desired continuous extension.

\subsection{Uniform convergence from bidirectional driving convergence}

We begin by proving some useful consequences of the time-separated
assumption. From now on we assume that $(\gam^j)$ is a sequence in
$\simplepathsR$. Since these curves extend continuously to their
endpoint, if we use the $\capacity_1$ parametrization we will write
$\gam^j[t,\infty]$ for the closure of $\gam^j[t,\infty)$.
\begin{defn}
Let $\eta$ be a half-open curve such as constructed by
Lem\-ma~\ref{lem:compatibility}, parametrized by $\capacity_1$. We say that a time
$t_0$ is \textit{nondouble} if $y_0\defeq\eta(t_0)$ is a nondouble point
of $\eta$. We say that $t_0$ is \textit{strongly nondouble} if in
addition $y_0$ does not lie in the closure of $\eta[t,\infty)$ for any
$t>t_0$. We make the symmetric definitions for $\xi$ parametrized by
$\capacity_{-1}$.
\end{defn}
\begin{lem} \label{lem:time_sep_dense_non_double_times}
Let $(\gam^j)$ be a sequence in $\simplepaths$, and suppose that for
each $x\in\Psi$ there exists $\gam^x\in\tspathsR$ with
$d_{x}^{R}(\gam
^j,\gam^x)\to0$. Then the half-open curve $\eta$ constructed by
Lemma \ref{lem:compatibility} has a dense collection of nondouble
times (under the $\capacity_1$ parametrization).
\end{lem}
\begin{pf}
For $0<t_1<t_2<t_3$, we say that $z$ is a $(t_1,t_2,t_3)$-double point
if $\eta(s)=\eta(s')=z$ for some $s\in[0,t_1]$, $s'\in[t_2,t_3]$. Let
$\mathcal D_{t_1,t_2,t_3}$ denote the set of times mapping to
$(t_1,t_2,t_3)$-double points. Then $\mathcal D_{t_1,t_2,t_3}$ is a
closed subset of $\R_{\ge0}$, and since $\eta[0,t_3]$ is
time-separated, it must be that $\mathcal D_{t_1,t_2,t_3}$ has dense
complement in $\R_{\ge0}$: if $\mathcal D_{t_1,t_2,t_3}$ contained a
nontrivial time interval, the interval would map to a nontrivial
connected component of $\eta[0,t_1]\cap\eta[t_2,t_3]$ since $\eta$ is
assumed to be continuously driven (hence not locally constant).

The set $\mathcal D$ of all times mapping to double points can be
expressed as the union of $\mathcal D_{t_1,t_2,t_3}$ over all rational
triples $0<t_1<t_2<t_3$. The countable intersection of open dense sets
is dense by the Baire category theorem, so $\mathcal D$ has dense
complement as desired.
\end{pf}

We now assume the notation and hypotheses of Theorem
\ref{thm:unif_conv}, and let $\eta\defeq\bigcup_x\hat\eta^x$ and $\xi
\defeq
\bigcup_x\hat\xi^x$ be the half-open curves given by Lemma
\ref{lem:compatibility}. At this point we have not yet shown that either
curve extends continuously to its terminal endpoint or that one curve
is the time reversal of the other. However, we know that if there
exists an $x \in\Psi$ that is not swallowed by $\eta$ before its
terminal time, then $\eta= \hat\eta^x$ and hence $\eta$ extends
continuously to its endpoint. We also know that $\HH_x(\eta) =
\HH_x(\eta^x) = \HH_x(\xi^x) = \HH_x(\xi)$ for every $x \in\Psi
$, since
these sets are components of the complement of the Hausdorff limit of
the $\gam^j$ (see Lemma \ref{lem:haus_conv}). Thus, as sets, both
$\eta
$ and $\xi$ contain
\[
\bigcup_{x \in\Psi}\overline{\partial\HH_{x,\infty}(\eta
)\setminus\R},
\]
and both are contained in the closure of this union.\vadjust{\goodbreak}
\begin{lem} \label{lem:strongly_non_double}
Let $\eta$ and $\xi$ be defined as above, parametrized by $\capacity_1$
and $\capacity_{-1}$, respectively. Under this parametrization, each
curve has a dense collection of strongly nondouble times.
\end{lem}
\begin{pf}
Suppose for the sake of contradiction that there is a time interval
$[t_1,t_2]$ (with $t_1<t_2$) in which every time fails to be a strongly
nondouble time for $\eta$. By Lemma
\ref{lem:time_sep_dense_non_double_times}, this time interval does contain
a dense set of nondouble times $t_0$, so it must be that each
corresponding $y_0\defeq\eta(t_0)$ arises as the subsequential limit of
$\eta(t)$ for large $t$. Therefore $S\defeq\eta[t_1,t_2]$ must lie in
the closure of $\eta[t,\infty)$ for any $t>t_2$. We claim that for such
$t$, any subsequential $\cdisthaus$ limit of the sets $\gam
^j[t,\infty
]$ must contain $S$. Indeed, by Lemma~\ref{lem:haus_conv} the limit
must contain $\eta[t,\infty)\setminus\eta[0,t]$. Since $\eta$ is
continuously driven, $\eta[t,\infty) \setminus\eta[0,t]$ is dense in
$\eta[t,\infty)$. Since any $\cdisthaus$ limit is closed, this proves
the claim.

It is clear that we can assume that $t_2$ is a nondouble time. We
therefore consider the following two cases:

\textit{Case} 1. Suppose that after time $t_2$, $\eta$ first hits $(\eta
[0,t_2]\cup\R)\setminus\{\eta(t_2)\}$ at some time $t_3 > t_2$. Let
$x\in\Psi$ be such that $x$ is swallowed at this time, and such that
some nontrivial connected subset $S'$ of $\pd S$ is contained in the
boundary of $U_x\defeq\HH_x(\eta)$. (That such $x$ exists is easy to
see from the definition of $t_3$, for example, by a simple compactness
argument.) By re-labeling we now suppose that the times $t_i$ ($1\le
i\le3$) are all with respect to the $\capacity_x$ parametrization.

Now, in the reverse direction, let $\acute t_i^j$ ($1\le i\le3$) be
defined by $\gam^j(t_i)=\gam^{j-}(\acute t_i^j)$ (again, with respect
to $\capacity_x$). By passing to a subsequence we may suppose that
$\acute t_i^j$ converges to some $\acute t_i$ for each $i$. By
hypothesis, $\gam^{j-}[0,\acute t_3^j]$ converges to $\xi[0,\acute
t_3]$ with respect to $d_{x}^{L}$ for all $x\in\Psi$, and by
Lemma \ref{lem:haus_conv} it converges in $\cdisthaus$ as well, so by
the first claim above $\xi[0,\acute t_3]\supseteq S$. Since $\xi$ is
time-separated, during the time interval $[\acute t_3,\acute t_1]$ it
can only hit a (closed) totally disconnected subset of $S$. Therefore,
we can find a point $y$ in the interior of $S'$ (in the subspace
topology on $S'$) such that a small neighborhood $V$ of $y$ is not hit
by $\xi[\acute t_3,\acute t_1]$. We can then choose $z\in U_x$ close
enough to $y$ such that with probability at least $1-\ep$ (for small
$\ep>0$), a~Brownian motion started at $z$ and stopped upon hitting
$\pd U_x$ will stop on $V\cap S'$, so that $p^z_x(\xi[\acute t_2,
\acute t_1];\xi)\le\ep$. But on the other hand $S'\subseteq\eta
[t_1,t_2]$, so $p^z_x(\eta[t_1,t_2];\eta)\ge1-\ep$. The contradiction
follows from noting that by Corollary \ref{cor:hitting_brownian},
\begin{eqnarray*}
p^z_x(\eta[t_1,t_2];\eta)
&=& \lim_{j\to\infty} p^z_x(\gam^j[t_1,t_2];\gam^j)
= \lim_{j\to\infty} p^z_x(\gam^{j-}[\acute t_2^j,\acute
t_1^j];\gam^{j-})\\
&=& p^z_x(\xi[\acute t_2,\acute t_1];\xi).
\end{eqnarray*}

\textit{Case} 2. Now suppose $\eta$ never hits $(\eta[0,t_2]\cup\R
)\setminus\{\eta(t_2)\}$ after time $t_2$, and let $z$ be any point of
$\Psi$ that is swallowed by $\eta$ after time $t_2$, say at time $t_3$.
Note that $\eta(t_2)$ forms a cut point of $\eta[0,t_3]$. Now, $\eta$
must swallow every point of $\Psi$ eventually: if some $x\in\Psi$ does
not get swallowed then we would have $\eta^x=\eta$, but by hypothesis
the $\eta^x$ lie in $\tspaths$, and hence extend continuously to their
endpoints, while~$\eta$ does not. Consider those points \mbox{$x\in\Psi$}
which lie in a neighborhood of the cut point $\eta(t_2)$ and which have
not been swallowed by time $t_2$: since all of these points must
eventually be swallowed (but they cannot all be swallowed at once since
$\eta$ never hits $(\eta[0,t_2]\cup\R)\setminus\{\eta(t_2)\}$), we
see that the closure of $\eta[t_3,\infty)$ must surround~$z$, and thus
the~$\cdisthaus$ limits of both $\gamma^j[t_2,t_3]$ and $\gamma
^j[t_3,\infty]$, which we denote~$B$ and~$B'$, must surround~$z$. $B$~and $B'$ are connected sets, and neither is contained in the other
since~$\eta$ and~$\xi$ are continuously driven, but by construction $B$
will be ``nested'' inside~$B'$. This contradicts the assumption that
the $\gam^{j-}$ converge with respect to~$d_{z}^{R}$ to a
continuously driven curve.
\end{pf}

Note that it follows immediately that there is a dense collection of
strongly nondouble times mapping to points not in $\R$, since for
continuously driven curves the set of times mapping into $\R$ is closed
and totally disconnected.
\begin{lem} \label{lem:bidirectional}
Assume the notation and hypotheses of Theorem \ref{thm:unif_conv}, and
let $\eta\defeq\bigcup_x\hat\eta^x$ and $\xi\defeq\bigcup_x\hat
\xi^x$
be the half-open curves given by Lemma \ref{lem:compatibility},
parametrized by $\capacity_1$ and $\capacity_{-1}$, respectively. Let
$z_0=\eta(t_0)$ be a strongly nondouble point of $\eta$ with
$z_0\notin
\R$. Fix $t_*>t_0$, and let $\eta^j_*$ denote the curve $\gam^j$
stopped at time~$t_*$. Let $\bar\eta^j_*$ denote the remaining curve
$\gamma^j \setminus\eta^j_*$. Then, if $X$ is any (subsequential)
$\cdisthaus$ limit of the $\bar\eta^j_*$, we will have $z_0\notin X$.
\end{lem}
\begin{pf}
Since $z_0$ is a strongly nondouble point, we can find $\ep>0$ small
enough so that $\eta$ does not enter $\overline{B_{\ep}(z_0)}$ after
time $t_*$. We further require \mbox{$B_{\ep}(z_0)\subset\HH$}. By the
Beurling estimate, for any $\de>0$ we may choose $0<\ep'<\ep$ small
enough so that for any curve $P$ crossing the annulus $\{\varepsilon
'<|z-z_0|<\varepsilon\}$, a Brownian motion started inside $B_{\ep
'}(z_0)$ has probability less than $\de$ of exiting $B_{\ep}(z_0)$
without hitting~$P$. Thus for $x\in B_{\ep'}(z_0)\cap\Psi$ we will
have $p_x(\eta[t_*,\infty);\eta)<\de$ and $p_x(\R;\eta)<\de$,
and so by
Corollary \ref{cor:hitting_brownian}
%
%
\begin{equation} \label{eq:bidir_fwd}
\lim_{j\to\infty} p_x(\bar\eta^j_*;\gam^j)<\de\quad\mbox{and}\quad
\lim_{j\to\infty} p_x(\R;\gam^j)<\de.
\end{equation}

We now consider the reverse direction. Let $0<\ep''<\ep'$ and
\[
s_1\defeq\inf\{t\ge0\dvtx\xi(t)\in B_{\ep''}(z_0)\},
\]
and note that since $\gam^{j-}[0,s_1]$ has $\cdisthaus$ limit $\xi
[0,s_1]$ not containing $z_0$, a fortiori we have $\gam
^{j-}[0,s_1]\subset\bar\eta^j_*$ for large $j$. Now, making use of
Lemma \ref{lem:strongly_non_double}, let $s_0<s_1$ be a strongly
nondouble time of $\xi$ such that $\acute z_0=\xi(s_0)$ lies in $\{
\ep''<|z-z_0|<\ep'\}$, and let $\tilde\ep>0$ be small enough so that
$\xi$ does not enter $B_{\tilde\ep}(\acute z_0)$ after ti\-me~$s_1$.
Applying the Beurling estimate again we choose $0<\tilde\ep'<\tilde
\ep$
small enough so that for any curve $P$ crossing $\{\tilde\ep
'<|z-\acute z_0|<\tilde\ep\}$, a Brownian motion started inside
$B_{\tilde\ep'}(\acute z_0)$ has probability less than $\de$ of exiting
$B_{\tilde\ep}(\acute z_0)$ without hitting $P$. Then for $x\in
B_{\tilde\ep}(\acute z_0)\cap\Psi$ we have $p_x(\xi[s_1,\infty
);\xi
)<\de
$, and so by our observation above
%
%
\begin{equation} \label{eq:bidir_back}
\lim_{j\to\infty} p_x(\eta^j_*;\gam^j)
\le\lim_{j\to\infty} p_x(\gam^{j-}[s_1,\infty];\gam^{j-})<\de.
\end{equation}
Combining (\ref{eq:bidir_fwd}) and (\ref{eq:bidir_back}) gives
\[
\lim_{j\to\infty} [
p_x(\eta^j_*;\gam^j)
+ p_x(\bar\eta^j_*;\gam^j)
+ p_x(\R;\gam^j)
] < 3\de
\]
and setting $\de\le1/3$ we obtain a contradiction, since a Brownian
motion started at $x$ and stopped upon hitting $\eta\cup\R$ must
clearly be stopped at one of $\eta^j_*$, $\bar\eta^j_*$ or $\R$.
\end{pf}
\begin{cor} \label{cor:bidirectional}
Assume the notation and hypotheses of Theorem \ref{thm:unif_conv} and
Lemma \ref{lem:bidirectional}. The paths $\eta,\xi$ extend continuously
to their endpoints and $\eta=\xi^-\defeqr\gamma$.
\end{cor}
\begin{pf}
First, notice that if $z_1,z_2$ are strongly nondouble points of $\eta
$ not in $\R$ such that $\eta$ hits $z_1$ before $z_2$, then $\xi$ hits
both these points, and will hit $z_2$ before~$z_1$. Indeed, writing
$t_i=\eta^{-1}(z_i)$, let $t_*\in(t_1,t_2)$: by Lemma
\ref{lem:bidirectional}, any subsequential $\cdisthaus$ limit $X$ of the
curves $\bar\eta_*^j=\gam^j[t_*,\infty]$ will be a closed initial
segment of $\xi$ containing $\eta\setminus\eta[0,t_*]$ (hence the point
$z_2$) but not $z_1$.

Suppose now that $\eta(t)$ has multiple limit points as $t\to\infty$,
that is, that there exist sequences $t_{i,k}$ (for $i=1,2$) such that
$z_i = \lim_k \eta(t_{i,k})$ with $z_1\ne z_2$. We may assume that all
the $t_{i,k}$ map to strongly nondouble points of $\eta$ not in $\R$,
by the continuity of $\eta$ and the density of such times. But then we
claim that for any $\varepsilon>0$ the curve $\xi$ must travel between
$B_{\varepsilon}(z_1)$ and $B_{\varepsilon}(z_2)$ infinitely
many times
before time $s$ for some $s>0$, which contradicts the continuity of
$\xi
$. Therefore $\eta$ extends continuously to its endpoint, and it
follows from the above that $\eta=\xi$ as sets.

It remains to show that $\xi=\eta^-$. We know that there is a dense set
of times $t_k$ which map to strongly nondouble points of $\eta$ not in
$\R$, and that $\xi$ hits these points in reverse order. Under the
$\capacity_{-1}$ parametrization of $\xi$, let $\mathcal T = \{t \dvtx
\xi
(t) = \eta(t_k)\mbox{ for some } k\}$. If $\mathcal T$ is a dense set
of times for $\xi$ then we are done, so suppose that there is an
interval of time $I$ not contained in $\overline{\mathcal T}$. But
$\xi
(\overline{\mathcal T}) = \eta[0,\infty) = \xi[0,\infty)$, and so
$I$ is
an interval of times mapping to double points, which gives the contradiction.
\end{pf}
\begin{pf*}{Proof of Theorem \ref{thm:unif_conv}}
Let $\eta\defeq\bigcup_x\hat\eta^x$ and $\xi\defeq\bigcup_x\hat
\xi^x$
be the half-open curves given by Lemma \ref{lem:compatibility}. Thanks
to Corollary \ref{cor:bidirectional} we finally know that the
(strongly) nondouble points of $\eta$ and $\xi$ are the same, as
$\gam
=\eta=\xi^-$. Let\vadjust{\goodbreak} $z_1=\gam(t_1),z_2=\gam(t_2)$ be nondouble points of
$\gam$ with $t_1<t_2$, and let $\gam[z_1,z_2]$ denote the portion of
$\gam$ between these two hitting points, viewed as a closed set. We
then let $\gamma^j[z_1,z_2]$ denote the portion of $\gamma^j$ between
its nearest approach to $z_1$ and its nearest approach to $z_2$. (If
there is a tie, we choose the earliest one, say.) To show uniform
convergence, it suffices to prove that for any such $z_1$ and $z_2$, we
have $\gamma^j[z_1,z_2]$ converging along subsequences in the
$\cdisthaus$ metric to subsets of $\gamma[z_1,z_2]$.

Let $X$ denote any subsequential $\cdisthaus$ limit of the $\gam
^j[z_1,z_2]$, and let $z_0=\gam(t_0)\notin\R$ be a nondouble point of
$\gam$ with $t_0\notin[t_1,t_2]$. We claim that $z_0\notin X$: without
loss of generality we assume $t_0>t_2$; then, by Lemma
\ref{lem:bidirectional} applied to the reverse path $\gam^-$, it suffices
to show that for some $t\in(t_2,t_0)$, the $\gam^j[z_1,z_2]$ are
stopped before time $t$ for sufficiently large $j$. But if such a $t$
does not exist, it means that for any $t\in(t_2,t_0)$, as $j\to\infty$,
we can find subsequences along which the nearest approach of $\gam^j$
to $z_2$ occurs after time $t$. But since~$z_2$ is in the $\cdisthaus$
limit of $\gam^j[0,t_2]$, this shows that it will be in the
$\cdisthaus
$ limit of $\gam^j[t,\infty]$ as well, which is a violation of
Lemma \ref{lem:bidirectional} applied to the forward path $\gam$.

It follows that $X$ is a connected subset of $\gam$ contained in
\[
C=\gam[0,t_2]\cap\gam[t_1,\infty)
= \gam[t_1,t_2] \cup\bigl( \gam[0,t_1] \cap\gam[t_2,\infty) \bigr)
\cup(\gam\cap\R).
\]
Since $\gam$ is continuously driven and time-separated, $\gam[0,t_1]
\cap\gam[t_2,\infty)$ and $\gam\cap\R$ are both totally disconnected
closed sets, hence their union is as well. Any point of $C$ not
contained in $\gam[t_1,t_2]$ therefore forms a trivial connected
component of $C$, so we must have $X\subseteq\gam[t_1,t_2]$, and the
statement of the theorem follows.
\end{pf*}
\begin{pf*}{Proof of Proposition \ref{ppn:unif_conv_simple}}
This result is essentially a restatement of the remarks following the
list of examples in Section \ref{subsec:main_result}: if $\gam$ is a
simple curve, then $\dcapf(\gam^j,\gam)\to0$ and $\dcapb(\gam
^{j-},\gam
^-)\to0$ together suffice to guarantee~$d_{x}^{R}$ and $d_{x}^{L}$
convergence with respect to all $x$ in a dense subset of $\overline
{\HH}$.~%
\end{pf*}

\subsection{Alternate proof of uniform convergence}

In this section we sketch an alternative proof of Theorem
\ref{thm:unif_conv} that does not use the lemmas of the previous section.
Instead of showing the existence of nondouble and strongly nondouble
times---and considering segments of the path between these times---we
begin by constructing a parametrization of $\eta$ and $\xi$ that
behaves well under time-reversal.

Suppose first that $\eta$ is parametrized by capacity time $t$. For any
\mbox{$x \in\Psi$}, observe that $f_x \defeq g_{x,\infty}^{-1} \circ\vph
^{-1}$ is a conformal map from the unit disc $\D$ to~$\HH_x(\eta)$ that
extends continuously to $\overline{\D}$. Thus, for any $t$,
\[
A_x(t) \equiv A^\eta_x(t) = \{\thet\in[0,1] \dvtx f_x(e^{2\pi i\thet})
\in\eta[0,t]\cap\HH\}
\]
is a closed subset of $[0,1]$. Let $s_x(t) \equiv s^\eta_x(t)$ be the
(Lebesgue) measure of the \textit{interior} of $A_x(t)$, divided by the
measure of $A_x(\infty)$. Using topological arguments, it is not hard
to see that this interior contains at most one interval of $[0,1]$\vadjust{\goodbreak}
(viewed topologically as a circle, by identifying endpoints). Thus~$s_x(t)$ is proportional to the length of this interval. Informally,
$s_x(t)$ represents the portion (in terms of harmonic measure from $x$)
of $\partial\HH_x(\eta) \setminus\R$ that has been traced by time
$t$. Because $\eta$ is continuous, $s_x(t)$ is a continuously
increasing function of $t$.

Now fix a map $a\dvtx\Psi\to(0,\infty)$ with $\sum_{x \in\Psi} a(x) = 1$
and write\vspace*{2pt} $s(t) =\break \sum_{x \in\Psi} a(x) s_x(t)$.\vspace*{2pt} We claim that $s$ is
\textit{strictly} increasing function of $t$. This follows from the
time-separation assumption and arguments in the proof of
Lemma~\ref{lem:strongly_non_double}, which show that an open dense subset of
$\partial(\HH\setminus\eta[0,t])\setminus\R$ will remain on the
boundary of $\partial(\HH\setminus\eta[0,\infty))\setminus\R$.

We therefore take $s \in[0,1)$ to be our new parametrization of $\eta
$, and we can assume that $\xi$ is analogously parametrized by $[0,1)$.
The proof now proceeds with the following observations:
\begin{enumerate}
\item If $t^j$ is any sequence of times then the sets $\gamma^j[0,t^j]$
and $\gam^j[t^j,\infty]$ must converge (subsequentially) in
$\cdisthaus
$ to $\eta[0,s]$ and $\xi[0,1-s]$ for some $s$. Indeed, using Lemma
\ref{lem:haus_conv} we already have Hausdorff convergence to some~$\eta
[0,s]$ and $\xi[0,s']$, and need only check that $s'=1-s$. This
involves checking for each $x$ that the Hausdorff limits of $\gamma
^j[0,t^j]$ and $\gamma^j[t^j,\infty]$ cannot contain overlapping
intervals of $\partial\HH_x(\eta)$, even though the union of these two
limits and $\R$ includes all of $\partial\HH_x(\eta)$. This is done
with the same arguments as those used in the previous section: if the
intervals overlapped, then either $\gamma^j$ or its time reversal would
fail to converge with respect to $x$ to a continuously driven limit.
\item The curve $\eta$ extends continuously to $[0,1]\dvtx\lim_{s \to1}
\cdisthaus(\xi[0,1-s], \{1\}) = 0$, but $\xi[0,1-s]$ is the Hausdorff
limit of $\gamma^j[t_j,1]$ (for some sequence $t_j$), and by the above
this must contain $\eta\setminus\eta[0,s)$, a dense subset of $\eta
[s,1]$. Since $\xi[0,1-s]$ is closed it must contain $\eta[s,1]$ which
gives the claim.
\item The above imply that $\eta(s) = \xi(1-s)$ for all $s \in[0,1]$.
\item For any pair of sequences of times $t^j_1$ and $t^j_2$ such that
$\gamma^j[0,t^j_1]$ tends to $\eta[0,a]$ and $\gamma^j[t^j_2,1]$ tends
to $\eta[b,1]$, we have Hausdorff convergence of $\gamma
^j[t^j_1,t^j_2]$ to a closed subset of $\eta[0,a] \cap\eta[b,1]$,
which by time-separation must be simply $\eta[a,b]$.
\end{enumerate}
The latter item implies convergence in $\dstrong$.

\section{Extension to random curves}
\label{sec:unif_conv_random}

Now we use Proposition \ref{ppn:separable} and
Theorem~\ref{thm:unif_conv} to prove Theorem \ref{thm:unif_conv_random}:
\begin{pf*}{Proof of Theorem \ref{thm:unif_conv_random}}
Each $\gam^j$ can be viewed as a random variable taking values in
\[
\Om_\Psi=\prod_{x\in\Psi} \Gamma_{x}^R \times\Gamma_{x}^L,
\]
where $\Gamma_{x}^R$ is the Polish (complete separable metric)
space defined by the completion of $\pathspaceR/\sim_x$ with respect to
$d_{x}^{R}$, and similarly $\Gamma_{x}^L$.\vadjust{\goodbreak}

Prohorov's criterion (see, e.g., \cite{billingsleyconv}) states that
a family $\Pi$ of probability measures on a complete separable metric
space is relatively compact (in the topology of weak convergence) if
and only if for every $\varepsilon$ there is a compact set $K$ such that
$\mu(K) \ge1-\varepsilon$ for all $\mu\in\Pi$. By hypothesis the marginal
laws of the $\gam^j$ on each $\Gamma_{x}^R$ (or $\Gamma_{x}^L$)
form a relatively compact family, so for each~$\ep$ we can find compact
sets $K_x^R\subset\Gamma_{x}^R$, $K_x^L\subset\Gamma_{x}^L$
such that
\[
\sum_{x\in\Psi} [\PP(\gam^j\notin K_x^R) + \PP(\gam^j\notin
K_x^L)]\le
\ep.
\]
By Tychonoff's theorem, the product $K=\prod_{x\in\Psi} (K_x^R\times
K_x^L)$ is also compact and has probability at least $1-\ep$. Applying
Prohorov's criterion again, we see that the laws of the $\gam^j$ form a
relatively compact family of measures on~$\Om_\Psi$.

Take a subsequence of the $\gam^j$ which converges in law (as $\Om
_\Psi
$-valued random variables) to a random element $\gam\in\Om_\Psi$.
Recall the Skorohod--Dudley theorem \cite{dudleyskorohod}, which
states that random variables on a complete separable metric space
converge in law to a limit if and only if there is a coupling in which
they converge almost surely. Thus we can define the $\gam^j$ of this
subsequence on the same probability space so that $\gam^j\to\tilde
\gam$
a.s. in $\Om_\Psi$. By the hypothesis of the theorem, $\tilde\gam$ has
the marginal law of~$\eta^x$ in each~$\Gamma_{x}^R$, and of~$\xi^x$
in each $\Gamma_{x}^L$, and so we can further couple the sequence
with $\eta^x$ and $\xi^x$ so that $d_{x}^{R}(\gam^j,\eta^x)\to
0$ and
$d_{x}^{L}(\gam^j,\xi^x)\to0$ a.s. for each $x\in\Psi$. Thus,
applying Theorem \ref{thm:unif_conv} we have $\dstrong(\gam^j,\gam
)\to
0$ for some random curve $\gam\in\pathspace$, which depends a priori on
the particular subsequence. However, we have a.s. that for each $x\in
\Psi$, $\eta^x$ is an initial segment of $\gam$ while $\xi^x$ is a
concluding segment. The marginal laws of the $\eta^x,\xi^x$ are
uniquely specified by the hypothesis of the theorem, and by taking $x$
arbitrarily close to the endpoints of $\gam$ we conclude that the law
of $\gam$ as a $\pathspace$-valued random variable is uniquely
specified also.

The above shows that every subsequence of the $\gam^j$ has a further
subsequence that converges in law to $\gam$ with respect to $\dstrong$;
this of course implies that the entire sequence $\gam^j$ converges in
law to $\gam$ with respect to $\dstrong$.
\end{pf*}

\section{Application to $\sle$ curves}
\label{sec:app_sle}

$\sle_\ka$ ($\ka<8$) misses $\Psi$ a.s. Thus, to apply our result to
$\sle$ curves, we need only show that the curves are a.s. time-separated:
\begin{lem} \label{lem:time_sep}
Let $\gam$ be a (random) $\sle_\ka$ curve traveling from $-1$ to $1$.
For $\ka<8$, $\gam\in\tspathsR$ a.s.
\end{lem}
\begin{pf}
For $\ka\le4$ this holds trivially since $\sle_\ka$ is a.s. simple. It
is also not hard to show that when $\kappa\in(4,8)$, the path $\sle
_\kappa$ is almost surely time-separated.\vadjust{\goodbreak} A much stronger set of
results is proved for the so-called $\sle_{\kappa; \kappa- 4, \kappa
-4}$ process in~\cite{dubedat}, Section 3. The set $X$ of cut point
times of an $\sle_{\kappa; \kappa-4, \kappa-4}$ curve $\gam_0$ is shown
to have the same law as the range of a stable subordinator with index
$2-\kappa/4$ (and in particular is totally disconnected)
(\cite{dubedat}, Corollary 13), and the path $\gam_0$ a.s. never revisits a
cut point, so that $\gam$ is injective on $X$. Given the cut point
times, the driving function restricted to each interval of $[0,\infty)
\setminus X$ (modulo additive constant) is independent of the driving
function (modulo additive constant) restricted to the other intervals
(see \cite{dubedat}, Section 3, Lemma 12 and Corollary 13). In other
words, the increments corresponding to the various intervals are
independent of one another. Each increment describes the ``bead''
traced by $\gam_0$ in between the two cut points, and it is easy to see
that each bead has at least a positive probability of having its left
and right boundaries both be nontrivial; thus there will almost
certainly be countably many such beads between each pair of cut points,
and this implies that $\gam_0(X)$ is a.s. totally disconnected, or
equivalently, that the intersection of the left and right boundaries of
$\gamma_0$ is totally disconnected. In between visits to $\R$, the
trace of an $\sle_{\kappa}$ has a~law which is absolutely continuous
with that of $\sle_{\kappa; \kappa- 4, \kappa-4}$~\cite{schrammwilson}.
From this one may deduce that if $\gam$ is an $\sle
_{\kappa}$ and $t$ is any fixed time, then the intersection of the left
and right boundaries $L_t$ and $R_t$ of $K_t$ is also a.s. totally
disconnected.

Now, $\gam[0,t]\cap\gam[t,\infty)$ must lie in $L_t\cup R_t$. Also,
$L_t\setminus R_t$ and $R_t\setminus L_t$ are mapped injectively into
$\R$ by $g_{1,t}$. Since the intersection of $\sle_\ka$ ($\ka<8$) with~$\R$ is totally disconnected a.s. (see, e.g.,
\cite{rohdeschramm}, Theorem 6.4), any connected component of $\gam[0,t]\cap\gam
[t,\infty)$ must lie in $L_t\cap R_t$. But as we saw above this set is
totally disconnected, and so we have a contradiction.
\end{pf}
\begin{pf*}{Proof of Corollary \ref{cor:app_sle}}
Lemma \ref{lem:time_sep} implies that the $d_{x}^{R}$ and $d_{x}^{L}$
limits of the $\gam^j$ are a.s. in $\tspathsR$ and $\tspathsL$,
respectively, so the result follows from Theorem~\ref{thm:unif_conv_random}.
\end{pf*}
\begin{pf*}{Proof of Corollary \ref{cor:app_sle_generalized}}
Lemma \ref{lem:time_sep} implies that the $d_{x}^{R}$ limits of the
$\gam^j$ are a.s. in $\tspathsR$, and by hypothesis the $d_{x}^{L}$
subsequential limits are in~$\tspathsL$, so the result follows from
Theorem \ref{thm:unif_conv_random}.
\end{pf*}

Now that we have proved Corollary \ref{cor:app_sle_generalized}, it is
worth remarking that Schramm and Wilson \cite{schrammwilson} have
given a complete characterization of driving functions for the forward
direction of $\sle$ viewed from different points, which we briefly
describe in our current context: let $\ka\ge0$ and $\rho\in\R$, and
consider the solution of the system
%
%
\begin{equation} \label{eq:sle_kappa_rho}
dW_t = \sqrt{\ka} \,dW_t + i \f{\rho}{2} \biggl( \f
{e^{iW_t}+V_t}{e^{iW_t}-V_t} \biggr) \,dt,\qquad
dV_t = -V_t \f{V_t + e^{i W_t}}{V_t - e^{i W_t}}
\end{equation}
with initial condition $(W_0;V_0)=(w_0;v_0) \in\partial\D$. The radial
Loewner chain obtained from\vadjust{\goodbreak} the driving function $W_t$ is a \textit{radial
$\sle_{\ka;\rho}$ in $\D$} started at $(w_0;v_0)$; $v_0$ is thought of
as a ``force point'' which adds some drift to the usual $\sle_\ka$
driving function. The conformal image of this random curve under $\vph
^{-1}$ is called a \textit{radial $\sle_{\ka;\rho}$ in $\HH$} started at
$(\vph^{-1}(w_0); \vph^{-1}(v_0))$. It was shown in
\cite{schrammwilson} that if $\gam$ is a standard chordal $\sle_\ka$
traveling in the upper half-plane between two boundary points $a,b$,
and $x=x_1 + ix_2$ is any point in $\HH$, then $\psi_x\gam$ is a radial
$\sle_{\ka;\ka-6}$ in $\HH$ started at $(w_0;v_0) = (\psi_x(a);
\psi
_x(b))$, and so the driving function $W_{x,t}$ is given by the solution
to (\ref{eq:sle_kappa_rho}) with $\rho=\ka-6$ and initial condition
$(W_0;V_0)=(\vph\psi_x(a);\vph\psi_x(b))$. (For $\ka=6$, $W_{x,t}$ is
simply a standard Brownian motion.)
\begin{pf*}{Proof of Corollary \ref{cor:app_sle_simple}}
Follows from Theorem \ref{thm:unif_conv} by (a simplified version of)
the proof of Theorem \ref{thm:unif_conv_random}.
\end{pf*}

\section*{Acknowledgments}

The idea of our main result---in the special case of a~simple
limiting curve---emerged during the first author's collaboration with
the late Oded Schramm on the convergence of level lines of the Gaussian
free field to $\sle_4$ and $\sle_{4;\rho}$
\cite{schrammsheffieldgff}.  The general problem was not solved at
that time (\cite{schrammsheffieldgff} employs a GFF-specific
topology-strengthening argument instead), but the seed was planted, and
we continue to benefit from Schramm's insight and encouragement. We
also thank Yuval Peres and the MSR Theory Group for supporting the
visit to Redmond during which this work was partially completed. The
second author thanks Amir Dembo for advice and support. We thank Jason
Miller and Steffen Rohde for very helpful feedback on a draft of this
paper.


%
\printaddresses

\end{document}